\documentclass[10pt,english]{smfart}

\usepackage[T1]{fontenc}
\usepackage[english,francais]{babel}
\usepackage{latexsym,amscd}
\usepackage{amsmath,amsfonts,amssymb,mathrsfs}
\usepackage{enumerate,euscript}
\usepackage{amssymb,url,xspace,smfthm}
\input xy
\xyoption{all}

\newcommand{\BibTeX}{{\scshape Bib}\kern-.08em\TeX}
\newcommand{\T}{\S\kern .15em\relax }
\newcommand{\AMS}{$\mathcal{A}$\kern-.1667em\lower.5ex\hbox
        {$\mathcal{M}$}\kern-.125em$\mathcal{S}$}

\DeclareMathOperator{\rang}{rk}

\DeclareMathOperator{\Spec}{Spec}


\tolerance 400
\pretolerance 200

\title{Positive degree and arithmetic bigness}
\date{\today}
\author{Huayi Chen}

\address{CMLS, \'Ecole Polytechnique\\
91120, Palaiseau, France. Universit\'e Paris 8}
\email{huayi.chen@polytechnique.org}
\urladdr{http://www.math.polytechnique.fr/~chen}

\begin{document}
\def\smfbyname{}

\begin{abstract}
We establish, for a generically big Hermitian line
bundle, the convergence of truncated Harder-Narasimhan
polygons and the uniform continuity of the limit. As
applications, we prove a conjecture of Moriwaki asserting
that the arithmetic volume function is actually a limit
instead of a sup-limit, and we show how to compute the
asymptotic polygon of a Hermitian line bundle, by using
the arithmetic volume function.
\end{abstract}

\maketitle

\tableofcontents
\section{Introduction}

Let $K$ be a number field and $\mathcal O_K$ be its
integer ring. Let $\mathcal X$ be a projective arithmetic
variety of total dimension $d$ over $\Spec\mathcal O_K$.
For any Hermitian line bundle $\overline{\mathcal L}$ on
$\mathcal X$, the {\it arithmetic volume} of
$\overline{\mathcal L}$ introduced by Moriwaki (see
\cite{Moriwaki07}) is
\begin{equation}\label{Equ:volume arithm}\widehat{\mathrm{vol}}(\overline
{\mathcal L})=\limsup_{n\rightarrow\infty}
\frac{\widehat{h}^0(\mathcal X,\overline{\mathcal
L}^{\otimes n})}{n^d/d!},\end{equation}where
$\widehat{h}^0(\mathcal X,\overline{\mathcal L}^{\otimes
n}):=\log\#\{s\in H^0(\mathcal X,\mathcal L^{\otimes
n})\;|\; \forall\sigma:K\rightarrow\mathbb
C,\,\|s\|_{\sigma,\sup}\le 1\}$. The Hermitian line
bundle $\overline{\mathcal L}$ is said to be {\it
arithmetically big} if
$\widehat{\mathrm{vol}}(\overline{\mathcal L})>0$. The
notion of arithmetic bigness had been firstly introduced
by Moriwaki \cite{Moriwaki00} \S2 in a different form.
Recently he himself (\cite{Moriwaki07} Theorem 4.5) and
Yuan (\cite{Yuan07} Corollary 2.4) have proved that the
arithmetic bigness in \cite{Moriwaki00} is actually
equivalent to the strict positivity of the arithmetic
volume function \eqref{Equ:volume arithm}. In
\cite{Moriwaki07}, Moriwaki has proved the continuity of
\eqref{Equ:volume arithm} with respect to
$\overline{\mathcal L}$ and then deduced some comparisons
to arithmetic intersection numbers ({\it loc. cit.}
Theorem 6.2).

Note that the volume function \eqref{Equ:volume arithm}
is an arithmetic analogue of the classical volume
function for a line bundle on a projective variety: if
$L$ is a line bundle on a projective variety $X$ of
dimension $d$ defined over a field $k$, the {\it volume}
of $L$ is
\begin{equation}\label{Equ:volume geom}\mathrm{vol}(L):=
\displaystyle\limsup_{n\rightarrow\infty}
\frac{\rang_kH^0(X,L^{\otimes n})}{n^d/d!}.\end{equation}
Similarly, $L$ is said to be {\it big} if
$\mathrm{vol}(L)>0$. After Fujita's approximation theorem
(see \cite{Fujita94}, and \cite{Takagi07} for positive
characteristic case), the sup-limit in \eqref{Equ:volume
geom} is in fact a limit (see \cite{LazarsfeldII}
11.4.7).

During a presentation at {\it Institut de Math\'ematiques
de Jussieu}, Moriwaki has conjectured that, in arithmetic
case, the sequence $\big( \widehat{h}^0(\mathcal
X,\overline {\mathcal L})/n^d\big)_{n\ge1}$ also
converges. In other words, one
has\[\displaystyle\widehat{\mathrm{vol}}(\overline{\mathcal
L})=\lim_{n\rightarrow\infty}
\frac{\widehat{h}^0(\mathcal X,\overline{\mathcal
L}^{\otimes n})}{n^d/d!}.\] The strategy proposed by him
is to develop an analogue of Fujita's approximation
theorem in arithmetic setting (see \cite{Moriwaki07}
Remark 5.7).

In this article, we prove Moriwaki's conjecture by
establishing a convergence result of Harder-Narasimhan
polygons (Theorem \ref{Thm:convergence de polygon
general}), which is a generalization of the author's
previous work \cite{Chen08} where the main tool is the
Harder-Narasimhan filtration (indexed by $\mathbb R$) of
a Hermitian vector bundle on $\Spec\mathcal O_K$ and its
associated Borel measure. To apply the convergence of
polygons, the main idea is to compare
$\widehat{h}^0(\overline E)$, defined as the logarithm of
the number of effective points in $E$, to the {\it
positive degree} $\widehat{\deg}_+(\overline E)$, which
is the maximal value of the Harder-Narasimhan polygon of
$\overline E$. Here $\overline E$ denotes a Hermitian
vector bundle on $\Spec\mathcal O_K$. We show that the
arithmetic volume function coincides with the limit of
normalized positive degrees and therefore prove the
conjecture.

In \cite{Moriwaki07} and \cite{Yuan07}, the important
(analytical) technic used by both authors is the
estimation of the distortion function, which has already
appeared in \cite{Abbes-Bouche}. The approach in the
present work, which is similar to that in
\cite{Rumely_Lau_Varley}, relies on purely algebraic
arguments. We also establish an explicit link between the
volume function and some geometric invariants of
$\overline{\mathcal L}$ such as asymptotic slopes, which
permits us to prove that $\overline{\mathcal L}$ is big
if and only if the norm of the smallest non-zero section
of $\overline{\mathcal L}^{\otimes n}$ decreases
exponentially when $n$ tends to infinity. This result is
analogous to Theorem 4.5 of \cite{Moriwaki07} or
Corollary 2.4 (1)$\Leftrightarrow$(4) of \cite{Yuan07}
except that we avoid using analytical methods.

In our approach, the arithmetic volume function can be
interpreted as the limit of maximal values of
Harder-Narasimhan polygons. Inspired by Moriwaki's work
\cite{Moriwaki07}, we shall establish the uniform
continuity for limit of truncated Harder-Narasimhan
polygons (Theorem \ref{Thm:continuity}). This result
refines {\it loc. cit.} Theorem 5.4. Furthermore, we show
that the asymptotic polygon can be calculated from the
volume function of the Hermitian line bundle twisted by
pull-backs of Hermitian line bundles on $\Spec\mathcal
O_K$.

Our method works also in function field case. It
establishes an explicit link between the geometric volume
function and some classical geometry such as
semistability and Harder-Narasimhan filtration. This
generalizes for example a work of Wolfe \cite{Wolfe05}
(see also \cite{Ein_Las_Mus_Nak_Po05} Example 2.12)
concerning volume function on ruled varieties over
curves. Moreover, recent results in
\cite{Boucksom02,Bou_Fav_Mat06,Ber_Bou08} show that at
least in function field case, the asymptotic polygon is
``differentiable'' with respect to the line bundle, and
there may be a ``measure-valued intersection product''
from which we recover arithmetic invariants by
integration.

The rest of this article is organized as follows. We fist
recall some notation in Arakelov geometry in the second
section. In the third section, we introduce the notion of
positive degree for a Hermitian vector bundle on
$\Spec\mathcal O_K$ and we compare it to the logarithm of
the number of effective elements. The main tool is the
Riemann-Roch inequality on $\Spec\mathcal O_K$ due to
Gillet and Soul\'e \cite{Gillet-Soule91}. In the fourth
section, we establish the convergence of the measures
associated to suitably filtered section algebra of a big
line bundle (Theorem \ref{Thm:convergence de polygon
general}). We show in the fifth section that the
arithmetic bigness of $\overline{\mathcal L}$ implies the
classical one of $\mathcal L_K$, which is a
generalization of a result of Yuan \cite{Yuan07}. By the
convergence result in the fourth section, we are able to
prove that the volume of $\overline{\mathcal L}$
coincides with the limit of normalized positive degrees,
and therefore the sup-limit in \eqref{Equ:volume arithm}
is in fact a limit (Theorem \ref{Thm:volume comme une
limit}). Here we also need the comparison result in the
third section. Finally, we prove that the arithmetic
bigness is equivalent to the positivity of asymptotic
maximal slope (Theorem \ref{Thm:acritereion}). In the
sixth section, we establish the continuity of the limit
of truncated polygons. Then we show in the seventh
section how to compute the asymptotic
polygon.\vspace{2mm}

{\bf\noindent Acknowledgement } This work is inspired
by a talk of Moriwaki at the {\it Institut de
Math\'ematiques de Jussieu}. I am grateful to him for
pointing out to me that his results in
\cite{Moriwaki07} hold in continuous metric case as an
easy consequence of Weierstrass-Stone theorem. I would
like to thank J.-B. Bost for a stimulating suggestion
and helpful comments, also for having found an error in
a previous version of this article. I am also grateful
to A. Chambert-Loir, C. Mourougane and C. Soul\'e for
discussions. Most of results in the present article are
obtained and written during my visit at the {\it
Institut des Hautes \'Etudes Scientifiques}. I would
like to thank the institute for hospitalities.

\section{Notation and reminders}

Throughout this article, we fix a number field $K$ and
denote by $\mathcal O_K$ its algebraic integer ring, and
by $\Delta_K$ its discriminant. By (projective) {\it
arithmetic variety} we mean an integral projective flat
$\mathcal O_K$-scheme.
\subsection{Hermitian vector bundles}
If $\mathcal X$ is an arithmetic variety, one calls {\it
Hermitian vector bundle} on $\mathcal X$ any pair
$\overline {\mathcal E}=(\mathcal
E,(\|\cdot\|_\sigma)_{\sigma:K\rightarrow\mathbb C})$
where $\mathcal E$ is a locally free $\mathcal
O_X$-module, and for any embedding
$\sigma:K\rightarrow\mathbb C$, $\|\cdot\|_\sigma$ is a
continuous Hermitian norm on $\mathcal E_{\sigma,\mathbb
C}$. One requires in addition that the metrics
$(\|\cdot\|_\sigma)_{\sigma:K\rightarrow\mathbb C}$ are
invariant by the action of complex conjugation. The {\it
rank} of $\overline{\mathcal E}$ is just that of
$\mathcal E$. If $\rang{\mathcal E}=1$, one says that
$\overline{\mathcal E}$ is a {\it Hermitian line bundle}.
Note that $\Spec\mathcal O_K$ is itself an arithmetic
variety. A Hermitian vector bundle on $\Spec\mathcal O_K$
is just a projective $\mathcal O_K$-module equipped with
Hermitian norms which are invariant under complex
conjugation. Let $a$ be a real number. Denote by
$\overline L_a$ the Hermitian line bundle
\begin{equation}\label{Equ:la}\overline L_a:=(\mathcal
O_K,(\|\cdot\|_{\sigma,a})_{\sigma:K\rightarrow\mathbb C
}),\end{equation} where
$\|\mathbf{1}\|_{\sigma,a}=e^{-a}$, $\mathbf{1}$ being
the unit of $\mathcal O_K$.
\subsection{Arakelov degree, slope and Harder-Narasimhan polygon}
Several invariants are naturally defined for Hermitian
vector bundles on $\Spec\mathcal O_K$, notably the {\it
Arakelov degree}, which leads to other arithmetic
invariants (cf. \cite{BostBour96}). If $\overline E$ is a
Hermitian vector bundle of rank $r$ on $\Spec\mathcal
O_K$, the {\it Arakelov degree} of $\overline E$ is
defined as the real number
\[\widehat{\deg}(\overline E):=\log\#\big(E/(\mathcal O_Ks_1+
\cdots+\mathcal O_Ks_r)\big)-\frac
12\sum_{\sigma:K\rightarrow\mathbb C}\log\det\big(\langle
s_i,s_j\rangle_{\sigma}\big)_{1\le i,j\le r},\] where
$(s_i)_{1\le i\le r}$ is an element in $E^r$ which forms
a basis of $E_K$. This definition does not depend on the
choice of $(s_i)_{1\le i\le r}$. If $E$ is non-zero, the
{\it slope} of $\overline E$ is defined to be the
quotient $\widehat{\mu}(\overline
E):=\widehat{\deg}(\overline E)/\rang E$. The {\it
maximal slope} of $\overline E$ is the maximal value of
slopes of all non-zero Hermitian subbundles of $\overline
E$. The {\it minimal slope} of $\overline E$ is the
minimal value of slopes of all non-zero Hermitian
quotients of $\overline E$. We say that $\overline E$ is
{\it semistable} if $\widehat{\mu}(\overline E
)=\widehat{\mu}_{\max}(\overline E)$.

Recall that the {\it Harder-Narasimhan polygon}
$P_{\overline E}$ is by definition the concave function
defined on $[0,\rang E]$ whose graph is the convex hull
of points of the form $(\rang F,\widehat{\deg}(\overline
F))$, where $\overline F$ runs over all Hermitian
subbundles of $\overline E$. By works of Stuhler
\cite{Stuhler76} and Grayson \cite{Grayson84}, this
polygon can be determined from the Harder-Narasimhan flag
of $\overline E$, which is the only flag
\begin{equation}\label{Equ:HNflag}
E=E_0\supset E_1\supset\cdots\supset E_n=0
\end{equation}
such that the subquotients $\overline E_i/\overline
E_{i+1}$ are all semistable, and verifies
\begin{equation}\label{Equ:success slope}\widehat{\mu}(\overline E_0/\overline
E_1)<\widehat{\mu}( \overline E_1/\overline
E_2)<\cdots<\widehat{\mu}(\overline E_{n-1}/\overline E_n
).\end{equation} In fact, the vertices of $P_{\overline
E}$ are just $(\rang E_i,\widehat{\deg}(\overline E_i))$.

For details about Hermitian vector bundles on
$\Spec\mathcal O_K$, see
\cite{BostBour96,Bost2001,Chambert}.

\subsection{Reminder on Borel measures}
\label{SubSec:Reminder on Borel measures}
Denote by $C_c(\mathbb R)$ the space of all continuous
functions of compact support on $\mathbb R$. Recall that
a Borel measure on $\mathbb R$ is just a positive linear
functional on $C_c(\mathbb R)$, where the word
``positive'' means that the linear functional sends a
positive function to a positive number. One says that a
sequence $(\nu_n)_{n\ge 1}$ of Borel measures on $\mathbb
R$ converges {\it vaguely} to the Borel measure $\nu$ if,
for any $h\in C_c(\mathbb R)$, the sequence of integrals
$\big(\int h\,\mathrm{d}\nu_n\big)_{n\ge 1}$ converges to
$\int h\,\mathrm{d}\nu$. This is also equivalent to the
convergence of integrals for any $h$ in
$C_0^{\infty}(\mathbb R)$, the space of all smooth
functions of compact support on $\mathbb R$.

Let $\nu$ be a Borel probability measure on $\mathbb R$.
If $a\in\mathbb R$, we denote by $\tau_a\nu$ the Borel
measure such that $\int h\,\mathrm{d}\tau_a\nu=\int
h(x+a)\nu(\mathrm{d}x)$. If $\varepsilon>0$, let
$T_{\varepsilon}\nu$ be the Borel measure such that $\int
h\,\mathrm{d}T_{\varepsilon}\nu=\int h(\varepsilon
x)\nu(\mathrm{d}x)$.

If $\nu$ is a Borel probability measure on $\mathbb R$
whose support is bounded from above, we denote by
$P(\nu)$ the Legendre transformation (see
\cite{Homander94} II \S2.2) of the function $x\mapsto
-\int_x^{+\infty}\nu(]y,+\infty[)\,\mathrm{d}y$. It is a concave
function on $[0,1[$ which takes value $0$ at the origin.
If $\nu$ is a linear combination of Dirac measures, then
$P(\nu)$ is a {\it polygon} (that is to say, concave and
piecewise linear). An alternative definition of $P(\nu)$
is, if we denote by
$F_{\nu}^*(t)=\sup\{x\,|\,\nu(]x,+\infty[)>t\}$, then
$P(\nu)(t)=\int_0^tF_{\nu}^*(s)\,\mathrm{d}s$. One has
$P(\tau_a\nu)(t)=P(\nu)(t)+at$ and
$P(T_{\varepsilon}\nu)=\varepsilon P(\nu)$.

If $\nu_1$ and $\nu_2$ are two Borel probability measures
on $\mathbb R$, we use the symbol $\nu_1\succ\nu_2$ or
$\nu_2\prec\nu_1$ to denote the following condition:
\begin{quote}
\it for any increasing and bounded function $h$, $\int
h\,\mathrm{d}\nu_1\ge\int h\,\mathrm{d}{\nu_2}$.
\end{quote}
It defines an order on the set of all Borel probability
measures on $\mathbb R$. If in addition $\nu_1$ and
$\nu_2$ are of support bounded from above, then
$P(\nu_1)\ge P(\nu_2)$.

\subsection{Filtered spaces} Let $k$ be a field and $V$
be a vector space of finite rank over $k$. We call {\it
filtration} of $V$ any family $\mathcal F=(\mathcal
F_aV)_{a\in\mathbb R}$ of subspaces of $V$ subject to
the following conditions
\begin{enumerate}[1)]
\item for all $a,b\in\mathbb R$ such that $a\le b$, one
has $\mathcal F_aV\supset\mathcal F_bV$,
\item $\mathcal F_aV=0$ for $a$ sufficiently positive,
\item $\mathcal F_aV=V$ for $a$ sufficiently negative,
\item the function $a\mapsto\rang_k(\mathcal F_aV)$ is
left continuous.
\end{enumerate}
Such filtration corresponds to a flag
\[V=V_0\supsetneq V_1\supsetneq V_2\supsetneq \cdots\supsetneq V_n=0\]
together with a strictly increasing real sequence
$(a_i)_{0\le i\le n-1}$ describing the points where the
function $a\mapsto\rang_k(\mathcal F_aV)$ is
discontinuous.

We define a function $\lambda:V\rightarrow\mathbb
R\cup\{+\infty\}$ as follows:
\[\lambda(x)=\sup\{a\in\mathbb R\,|\,x\in\mathcal F_aV\}.\]
This function actually takes values in
$\{a_0,\cdots,a_{n-1},+\infty\}$, and is finite on
$V\setminus\{0\}$.

If $V$ is non-zero, the filtered space $(V,\mathcal F)$
defines a Borel probability measure $\nu_{V}$ which is a
linear combination of Dirac measures:
\[\nu_{V}=\sum_{i=0}^{n-1}\frac{\rang V_i-\rang V_{i+1}}{
\rang V}\delta_{a_i}.\] Note that the support of $\nu_V$
is just $\{a_0,\cdots,a_{n-1}\}$. We define
$\lambda_{\min}(V)=a_0$ and $\lambda_{\max}(V)=a_{n-1}$.
Denote by $P_V$ the polygon $P(\nu_V)$. If $V=0$, by
convention we define $\nu_V$ as the zero measure.

If $\xymatrix{0\ar[r]&V'\ar[r]&V\ar[r]&V''\ar[r]&0}$ is
an exact sequence of filtered vector spaces, where
$V\neq 0$, then the following equality holds (cf.
\cite{Chen08} Proposition 1.2.5):
\begin{equation}\label{Equ:suite exacte es mesure}\nu_V=\frac{\rang V'}{\rang V}\nu_{V'}+\frac{\rang
V''}{\rang V} \nu_{V''}.\end{equation}

If $\overline E$ is a non-zero Hermitian vector bundle on
$\Spec\mathcal O_K$, then the Harder-Narasimhan flag
\eqref{Equ:HNflag} and the successive slope
\eqref{Equ:success slope} defines a filtration of
$V=E_K$, called the {\it Harder-Narasimhan filtration}.
We denote by $\nu_{\overline E}$ the Borel measure
associated to this filtration, called the {\it measure
associated} to the Hermitian vector bundle $\overline E$.
One has the following relations:
\begin{equation}
\lambda_{\max}(V)=\widehat{\mu}_{\max}(\overline E),\quad
\lambda_{\min}(V)=\widehat{\mu}_{\min}(\overline E),\quad
P_V=P_{\overline E}=P(\nu_{\overline E}).
\end{equation}
For details about filtered spaces and their measures and
polygons, see \cite{Chen08} \S1.2.

\subsection{Slope inequality and its measure form}

For any maximal ideal $\mathfrak p$ of $\mathcal O_K$,
denote by $K_{\mathfrak p}$ the completion of $K$ with
respect to the $\mathfrak p$-adic valuation $v_{\mathfrak
p}$ on $K$, and by $|\cdot|_{\mathfrak p}$ be the
$\mathfrak p$-adic absolute value such that
$|a|_{\mathfrak p}=\#(\mathcal O_K/\mathfrak
p)^{-v_{\mathfrak p}(a)}$.

Let $\overline E$ and $\overline F$ be two Hermitian
vector bundles on $\Spec\mathcal O_K$. Let
$\varphi:E_K\rightarrow F_K$ be a non-zero $K$-linear
homomorphism. For any maximal ideal $\mathfrak p$ of
$\mathcal O_K$, let $\|\varphi\|_{\mathfrak p}$ be the
norm of the linear mapping $\varphi_{K_{\mathfrak
p}}:E_{K_\mathfrak p}\rightarrow F_{K_{\mathfrak p}}$.
Similarly, for any embedding $\sigma:K\rightarrow\mathbb
C$, let $\|\varphi\|_\sigma$ be the norm of
$\varphi_{\sigma,\mathbb C}:E_{\sigma,\mathbb
C}\rightarrow F_{\sigma,\mathbb C}$. The {\it height} of
$\varphi$ is then defined as
\begin{equation}
h(\varphi):=\sum_{\mathfrak p}\log\|\varphi\|_{\mathfrak
p}+\sum_{\sigma:K\rightarrow\mathbb
C}\|\varphi\|_{\sigma}.
\end{equation}

Recall the {\it slope inequality} as follows (cf.
\cite{BostBour96} Proposition 4.3):
\begin{prop}
If $\varphi$ is injective, then
$\widehat{\mu}_{\max}(\overline
E)\le\widehat{\mu}_{\max}(\overline F)+h(\varphi)$.
\end{prop}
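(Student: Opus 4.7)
The plan is to reduce the inequality to the rank-one case in two stages: first pass to a maximal destabilizing subbundle of $\overline E$, then apply $\det$; the rank-one case is then a direct consequence of the product formula on $K$.

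First I would choose a non-zero semistable Hermitian subbundle $\overline G\subset\overline E$ with $\widehat{\mu}(\overline G)=\widehat{\mu}_{\max}(\overline E)$, namely the last non-zero term $\overline E_{n-1}$ of the Harder--Narasimhan flag \eqref{Equ:HNflag}. The restriction $\varphi|_{G_K}$ is still injective, and since restricting the source can only decrease operator norms at every place, $h(\varphi|_{G_K})\le h(\varphi)$. Replacing $\overline E$ by $\overline G$ reduces us to the case $\widehat{\mu}_{\max}(\overline E)=\widehat{\mu}(\overline E)$. Next, let $\overline{F'}\subset\overline F$ be the saturated Hermitian subbundle of $\overline F$ whose generic fibre equals $\varphi(E_K)$, with metrics induced from $\overline F$. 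Then $\widehat{\mu}(\overline{F'})\le\widehat{\mu}_{\max}(\overline F)$, and the norms of $\varphi$ considered as having target $F'$ coincide with those having target $F$; so it is enough to prove $\widehat{\mu}(\overline E)\le\widehat{\mu}(\overline{F'})+h(\varphi)$, a statement in which $\varphi\colon E_K\to F'_K$ is now a $K$-linear isomorphism and $\rang E=\rang F'=:r$.

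Now I apply $\det$ to obtain a non-zero element $\det\varphi\in\mathrm{Hom}_K(\det E_K,\det F'_K)$, which lives in a one-dimensional $K$-vector space. The key rank-one identity is that for any non-zero $K$-linear map $\psi\colon L_K\to M_K$ between Hermitian line bundles on $\Spec\mathcal O_K$, one has
\[\widehat{\deg}(\overline L)=\widehat{\deg}(\overline M)+h(\psi),\]
with equality; this follows by computing $\widehat{\deg}(\overline L^\vee\otimes\overline M)$ from any local trivialization and the non-zero section $\psi$, the product formula on $K^\times$ absorbing the dependence on trivializations. Applied to $\det\varphi$, and using $\widehat{\deg}(\det\overline E)=\widehat{\deg}(\overline E)$ and similarly for $\overline{F'}$, this yields $\widehat{\deg}(\overline E)=\widehat{\deg}(\overline{F'})+h(\det\varphi)$. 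Combined with the submultiplicativity $\|\det\varphi\|_v\le\|\varphi\|_v^{r}$ at every place (Hadamard's inequality at archimedean places, and the ultrametric triangle inequality applied to the Leibniz expansion on a local $\mathcal O_{K,\mathfrak p}$-basis at finite places), we get $h(\det\varphi)\le r\cdot h(\varphi)$, and dividing by $r$ completes the proof.

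The main technical point is the non-archimedean bookkeeping: one must check that the $\mathfrak p$-adic operator norm $\|\varphi\|_{\mathfrak p}$ coincides with the local norm of $\varphi$ viewed as a section of $\overline E^\vee\otimes\overline{F'}$ with its $\mathcal O_{K,\mathfrak p}$-lattice structure, so that the product-formula computation at finite places really contributes $-\log\|\varphi\|_{\mathfrak p}$ to the sum. Once these norms are set up consistently, each remaining step is routine.
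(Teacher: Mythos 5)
The paper does not prove this proposition; it is recalled as a known result with a pointer to \cite{BostBour96}, Proposition 4.3. Your argument is correct and is essentially the standard proof found there: reduce to the case of a semistable source by passing to the maximal destabilizing subbundle (the term $\overline E_{n-1}$ of the Harder--Narasimhan flag under the paper's increasing-slope convention), pass to the saturated image inside $\overline F$, and then treat the resulting isomorphism via $\det$, the product formula, and Hadamard/ultrametric submultiplicativity. The two places that deserve care are both handled correctly: $\overline E_{n-1}$ and the saturation $\overline F'$ carry the \emph{induced} lattice and metric structure, so the operator norms of the restricted/corestricted map are no larger than (respectively, equal to) those of $\varphi$, and the rank-one identity $\widehat{\deg}(\overline L)=\widehat{\deg}(\overline M)+h(\psi)$ is just the product formula applied to $\psi$ viewed as a nonzero element of $(L^\vee\otimes M)_K$, whose local norms agree with the operator norms.
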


The following estimation generalizing \cite{Chen08}
Corollary 2.2.6 is an application of the slope
inequality.

\begin{prop}\label{Pro:inegalite des pente}
Assume $\varphi$ is injective. Let $\theta=\rang E/\rang
F$. Then one has $\nu_{\overline F}\succ
\theta\tau_{h(\varphi)}\nu_{\overline E}
+(1-\theta)\delta_{\widehat{\mu}_{\min}(\overline F)}$.
\end{prop}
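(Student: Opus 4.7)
The strategy is to reduce the stated inequality to the measure form of the slope inequality for a map between equal-rank Hermitian bundles (\cite{Chen08}, Corollary~2.2.6, the special case $\theta=1$) via a decomposition of $\nu_{\overline F}$ along the image of $\varphi$. Concretely, I set $G_K := \varphi(E_K) \subset F_K$ and let $G$ be the saturated $\mathcal O_K$-submodule of $F$ with generic fibre $G_K$, so that $\overline G$ with the metrics restricted from $\overline F$ is a Hermitian sub-bundle of rank $\rang E = \theta\,\rang F$. The Harder--Narasimhan filtration of $\overline F$ induces a sub-filtration on $G_K$ and a quotient filtration on $F_K/G_K$, fitting into a short exact sequence of filtered $K$-vector spaces; formula \eqref{Equ:suite exacte es mesure} then yields
\[
\nu_{\overline F} \;=\; \theta\,\nu_{G_K}^{\mathrm{sub}} \;+\; (1-\theta)\,\nu_{F_K/G_K}^{\mathrm{quot}}.
\]

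Every jump of the induced quotient filtration lies among the HN-slopes of $\overline F$, which are all at least $\widehat{\mu}_{\min}(\overline F)$; hence $\nu_{F_K/G_K}^{\mathrm{quot}} \succ \delta_{\widehat{\mu}_{\min}(\overline F)}$ is immediate. For the sub-part, I view $\varphi$ as a $K$-isomorphism $\overline E \xrightarrow{\sim} \overline G$ between equal-rank Hermitian bundles and invoke \cite{Chen08}, Corollary~2.2.6, to obtain $\nu_{\overline G} \succ \tau_{h(\varphi)}\nu_{\overline E}$; combining this with the auxiliary comparison $\nu_{G_K}^{\mathrm{sub}} \succ \nu_{\overline G}$ --- which expresses that the ambient HN filtration of $\overline F$, restricted to $G_K$, dominates in the $\succ$-order the intrinsic Harder--Narasimhan filtration of $\overline G$ --- gives $\nu_{G_K}^{\mathrm{sub}} \succ \tau_{h(\varphi)}\nu_{\overline E}$. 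Taking convex combinations, which preserves $\succ$ among probability measures by linearity of the integral, then delivers the claim.

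The main point requiring care is the intermediate comparison $\nu_{G_K}^{\mathrm{sub}} \succ \nu_{\overline G}$. It should follow from another instance of the slope inequality, applied to the identity on $G_K$ viewed as connecting $\overline G$ with itself through the two filtrations, but the bookkeeping of heights and filtration jumps has to be done carefully to ensure that the domination points in the right direction and that the shift $h(\varphi)$ is correctly accounted for when transferred from $\overline G$ to the ambient sub-filtration coming from $\overline F$.
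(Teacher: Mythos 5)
Your decomposition of $\nu_{\overline F}$ via the short exact sequence and your bound $\nu_{F_K/G_K}^{\mathrm{quot}}\succ\delta_{\widehat\mu_{\min}(\overline F)}$ coincide with the paper's argument. Where you diverge is in handling the sub-part: the paper never introduces the intrinsic Harder--Narasimhan filtration of the saturation $\overline G$. It simply applies the measure form of the slope inequality (\cite{Chen08} Proposition 2.2.4) to $\varphi\colon\overline E\to\overline F$ directly, obtaining $\lambda_{F}(\varphi(x))\ge\lambda_{E}(x)-h(\varphi)$, and then equips $V=\Image(\varphi)$ with the filtration \emph{induced from the Harder--Narasimhan filtration of $\overline F$}; Corollary 2.2.6 then immediately gives $\nu_V\succ\tau_{h(\varphi)}\nu_{\overline E}$ for this $V$, and the decomposition finishes the proof. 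One application of the slope inequality suffices.

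Your route instead factors through $\overline G$ with its own Harder--Narasimhan filtration, which forces you to prove the auxiliary comparison $\nu_{G_K}^{\mathrm{sub}}\succ\nu_{\overline G}$. You correctly flag this as the point requiring care, but you neither prove it nor describe the right mechanism: the slope inequality ``applied to the identity on $G_K$'' does not make sense as stated, because the ambient sub-filtration on $G_K$ is not the Harder--Narasimhan filtration of any Hermitian bundle, so the hypothesis of the slope inequality is not met by an identity map. The correct way to close the gap is to apply the slope inequality to the \emph{inclusion} $\iota\colon\overline G\hookrightarrow\overline F$: since $G$ is saturated and the metrics on $\overline G$ are restricted from $\overline F$, one has $\|\iota\|_{\mathfrak p}\le 1$ for every finite place and $\|\iota\|_\sigma=1$ for every $\sigma$, hence $h(\iota)\le 0$; Proposition 2.2.4 then yields $\lambda_F(x)\ge\lambda_G(x)-h(\iota)\ge\lambda_G(x)$ for all $x\in G_K$, which is exactly $\nu_{G_K}^{\mathrm{sub}}\succ\nu_{\overline G}$. (Equivalently, one can argue via the universal property of the Harder--Narasimhan flag: the piece $\overline G_j$ with $\widehat\mu_{\min}(\overline G_j)=\alpha$ maps to zero in $\overline F/\overline F_i$ whenever $\widehat\mu_{\max}(\overline F/\overline F_i)<\alpha$, so $G_j\subset F_i$.) Once this lemma is in place your argument is correct, but it is an unnecessary detour: the paper's choice of filtration on $V$ bypasses the intermediate bundle $\overline G$ entirely.
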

\begin{proof}
We equip $E_K$ and $F_K$ with Harder-Narasimhan
filtrations. The slope inequality implies that
$\lambda(\varphi(x))\ge\lambda(x)-h(\varphi)$ for any
$x\in E_K$ (see \cite{Chen08} Proposition 2.2.4). Let $V$
be the image of $\varphi$, equipped with induced
filtration. By \cite{Chen08} Corollary 2.2.6,
$\nu_V\succ\tau_{h(\varphi)}\nu_{\overline E}$. By
\eqref{Equ:suite exacte es mesure}, $\nu_{\overline
F}\succ\theta\nu_V+(1-\theta)\delta_{\widehat{\mu}_{\min}(\overline
F)}$, so the proposition is proved.
\end{proof}

\section{Positive degree and number of effective elements}
\label{Sec:positive degree and } Let $\overline E$ be a
Hermitian vector bundle on $\Spec\mathcal O_K$. Define
\[\widehat{h}^0(\overline E):=\log\#\{s\in E\;|\;\forall
\sigma:K\rightarrow\mathbb C,\;\|s\|_\sigma\le 1\},\]
which is the logarithm of the number of effective
elements in $E$. Note that if
$\xymatrix{0\ar[r]&\overline E'\ar[r]&\overline
E\ar[r]&\overline E''\ar[r]&0}$ is a short exact sequence
of Hermitian vector bundles, then
$\widehat{h}^0(\overline E')\le\widehat{h}^0(\overline E
)\le\widehat{h}^0(\overline E')+\widehat{h}^0(\overline
E'')$.

In this section, we define an invariant of $\overline E$,
suggested by J.-B. Bost, which is called the {\it
positive degree}:
\[\widehat{\deg}_+(\overline
E):=\displaystyle\max_{t\in[0,1]}P_{\overline E}(t).\] If
$E$ is non-zero, define the {\it positive slope} of
$\overline E$ as $\widehat{\mu}_+(\overline
E)=\widehat{\deg}_+(\overline E)/\rang E$. Using the
Riemann-Roch inequality established by Gillet and Soul\'e
\cite{Gillet-Soule91}, we shall compare
$\widehat{h}^0(\overline E)$ to
$\widehat{\deg}_+(\overline E)$.

\subsection{Reminder on dualizing bundle and Riemann-Roch inequality}
Denote by $\overline{\omega}_{\mathcal O_K}$ the {\it
arithmetic dualizing bundle} on $\Spec\mathcal O_K$: it
is a Hermitian line bundle on $\Spec\mathcal O_K$ whose
underlying $\mathcal O_K$-module is $\omega_{\mathcal O_K
}:=\mathrm{Hom}_{\mathbb Z}(\mathcal O_K,\mathbb Z)$.
This $\mathcal O_K$-module is generated by the trace map
$\mathrm{tr}_{K/\mathbb Q}:K\rightarrow\mathbb Q$ up to
torsion. We choose Hermitian metrics on $\omega_{\mathcal
O_K}$ such that $\|\mathrm{tr}_{K/\mathbb Q}\|_\sigma=1$
for any embedding $\sigma:K\rightarrow\mathbb C$. The
arithmetic degree of $\overline{\omega}_{\mathcal O_K}$
is $\log|\Delta_K|$, where $\Delta_K$ is the discriminant
of $K$ over $\mathbb Q$.

We recall below a result in \cite{Gillet-Soule91}, which
should be considered as an arithmetic analogue of
classical Riemann-Roch formula for vector bundles on a
smooth projective curve.

\begin{prop}[Gillet and Soul\'e]\label{Pro:Gillet et soule}
There exists an increasing function $C_0:\mathbb
N_*\rightarrow\mathbb R_+$ satisfying $C_0(n)\ll_K n\log
n$ such that, for any Hermitian vector bundle $\overline
E$ on $\Spec\mathcal O_K$, one has
\begin{equation}\label{EqU:RRarithmetique}
\big|\widehat{h}^0(\overline
E)-\widehat{h}^0(\overline{\omega}_{\mathcal O_K}\otimes
\overline E^\vee)-\widehat{\deg}(\overline E )\big|\le
C_0(\rang E).
\end{equation}
\end{prop}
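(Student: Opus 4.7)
The plan is to mirror the classical Riemann-Roch equality $h^0(L)-h^1(L)=\deg L+1-g$ on a smooth projective curve, with Serre duality $h^1(L)=h^0(\omega\otimes L^\vee)$ playing the role of the identification between $\widehat h^0(\overline\omega_{\mathcal O_K}\otimes\overline E^\vee)$ and an arithmetic ``$\widehat h^1(\overline E)$''. The two essential inputs are Poisson summation on the lattice $E\hookrightarrow E_{\mathbb R}:=\bigoplus_{\sigma:K\to\mathbb C}E_\sigma$, which relates weighted counting functions on $E$ to those on the dual lattice (itself canonically identified, up to the twist by $\overline\omega_{\mathcal O_K}$, with the $\mathcal O_K$-module underlying $\overline\omega_{\mathcal O_K}\otimes\overline E^\vee$), together with Minkowski-style estimates comparing lattice point counts to volumes.

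Concretely, I would pick a smooth symmetric test function $\Phi:E_{\mathbb R}\to\mathbb R_{\ge 0}$ approximating the characteristic function of the unit ball $B_{\overline E}=\{x\in E_{\mathbb R}:\|x\|_\sigma\le 1\text{ for all }\sigma\}$, and compare $\widehat h^0(\overline E)$ with $\log\sum_{s\in E}\Phi(s)$. Poisson summation rewrites this as $\log\bigl(\mathrm{covol}(E)^{-1}\sum_{s^\vee\in E^\vee}\widehat\Phi(s^\vee)\bigr)$, and the covolume identity $\mathrm{covol}(E)=|\Delta_K|^{(\rang E)/2}e^{-\widehat{\deg}(\overline E)}$ produces exactly the main term $\widehat{\deg}(\overline E)$ in \eqref{EqU:RRarithmetique}. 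A parallel comparison on the dual side identifies the tail sum with $\widehat h^0(\overline\omega_{\mathcal O_K}\otimes\overline E^\vee)$ up to controlled smoothing errors, closing the circle.

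The hard part is quantifying these errors. The discrepancy between $\widehat h^0(\overline E)$ and its smoothed counterpart $\log\sum_{s\in E}\Phi(s)$ is controlled by a Blichfeldt/Minkowski argument producing an error of order $O_K(\rang E)$ per Archimedean place. The tail $\sum_{s^\vee\neq 0}\widehat\Phi(s^\vee)$ is governed by the first minima of $E^\vee$, which Minkowski's second theorem bounds in terms of the last minima of $E$ with a logarithmic loss in the rank; optimising the smoothing scale as a function of $\rang E$ is what yields the final bound $C_0(\rang E)\ll_K\rang E\log\rang E$. This quantitative balancing, uniform in $\overline E$, is the essential content of Gillet and Soul\'e's argument and would be the principal obstacle in a self-contained write-up; the monotonicity in $n$ is then obtained by applying the inequality to $\overline E\oplus\overline L_0^{\oplus k}$ for arbitrary $k$ and taking the maximum.
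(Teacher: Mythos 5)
The paper does not prove this proposition; it is recalled as background and cited directly from Gillet and Soul\'e \cite{Gillet-Soule91}, so there is no in-text proof against which to check your argument. That said, your sketch does trace the proof architecture in that reference: Poisson summation relating theta-type sums over the lattice $E\hookrightarrow E_{\mathbb R}$ and over its dual, the identification (up to the twist by $\overline{\omega}_{\mathcal O_K}$) of the dual lattice with the $\mathcal O_K$-module underlying $\overline{\omega}_{\mathcal O_K}\otimes\overline E^\vee$, the covolume--Arakelov degree relation producing the main term $\widehat{\deg}(\overline E)$, and successive-minima estimates via Minkowski's second theorem balancing the smoothing scale to yield the $n\log n$ error. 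Those are indeed the right ingredients. One correction: your proposed device for making $C_0$ increasing --- applying the inequality to $\overline E\oplus\overline L_0^{\oplus k}$ and maximising over $k$ --- is unnecessary and runs in the wrong direction, since it would only bound the rank-$n$ discrepancy by $C_0(n+k)$. One simply replaces $C_0(n)$ by the running maximum $\max_{m\le n}C_0(m)$, which is automatically increasing and still satisfies $C_0(n)\ll_K n\log n$.
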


\subsection{Comparison of $\widehat{h}^0$ and $\widehat{\deg}_+$}
Proposition \ref{Pro:comparaos} below is a comparison
between $\widehat{h}^0$ and $\widehat{\deg}_+$. The
following lemma, which is consequences of the
Riemann-Roch inequality \eqref{EqU:RRarithmetique}, is
needed for the proof.
\begin{lemm}\label{Lem:consequence de RRA}
Let $\overline E$ be a non-zero Hermitian vector bundle
on $\Spec\mathcal O_K$.
\begin{enumerate}[1)]
\item If $\widehat{\mu}_{\max}(\overline E)<0$, then $\widehat h^0(
\overline E)=0$.
\item If $\widehat{\mu}_{\min}(\overline
E)>\log|\Delta_K|$, then $\big|\widehat{h}^0(\overline E
)-\widehat{\deg}(\overline E)\big|\le C_0(\rang E)$.
\item If $\widehat{\mu}_{\min}(\overline E)\ge 0$, then
$ \big|\widehat h^0(\overline E)-\widehat{\deg}(\overline
E)\big|\le\log|\Delta_K|\rang E+C_0(\rang E) $.
\end{enumerate}
\end{lemm}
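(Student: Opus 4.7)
The plan is to handle the three parts in order, using the Gillet--Soulé inequality \eqref{EqU:RRarithmetique} together with, for (3), a twisting argument by $\overline L_a$ that reduces to (2). The only part needing real care is the upper bound in (3): the constant must come out to exactly $\log|\Delta_K|\,\rang E$, not something inflated by $[K:\mathbb Q]$.

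For (1), I would argue by contradiction. Suppose $\widehat h^0(\overline E)>0$ and pick a nonzero effective $s\in E$. Let $\overline{\mathcal L}$ be the saturation of $\mathcal O_K s$ in $\mathcal E$, equipped with the induced metrics; this is a Hermitian sub-line bundle of $\overline E$. Computing its Arakelov degree with $s$ as a $K$-basis of $\mathcal L_K$ yields
\[
\widehat{\deg}(\overline{\mathcal L}) = \log\#(\mathcal L/\mathcal O_K s) - \sum_\sigma \log\|s\|_\sigma \ge 0,
\]
both summands being non-negative. Hence $\widehat{\mu}_{\max}(\overline E)\ge\widehat{\deg}(\overline{\mathcal L})\ge 0$, contradicting the hypothesis of (1).

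For (2), I would apply \eqref{EqU:RRarithmetique} to $\overline E$, so that it suffices to check $\widehat h^0(\overline\omega_{\mathcal O_K}\otimes\overline E^\vee)=0$. But
\[
\widehat{\mu}_{\max}(\overline\omega_{\mathcal O_K}\otimes\overline E^\vee)=\log|\Delta_K|-\widehat{\mu}_{\min}(\overline E)<0
\]
by the hypothesis of (2), and (1) then gives the desired vanishing.

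For (3), I would split the two-sided estimate. The lower bound $\widehat h^0(\overline E)\ge \widehat{\deg}(\overline E)-C_0(\rang E)$, which is in fact stronger than required, is immediate from \eqref{EqU:RRarithmetique} together with the trivial inequality $\widehat h^0(\overline\omega_{\mathcal O_K}\otimes\overline E^\vee)\ge 0$. For the upper bound, reduce to (2) by twisting: for any $a>\log|\Delta_K|$, the Hermitian bundle $\overline E\otimes\overline L_a$ has $\widehat{\mu}_{\min}\ge a>\log|\Delta_K|$, so (2) gives
\[
\widehat h^0(\overline E\otimes\overline L_a)\le \widehat{\deg}(\overline E)+a\,\rang E+C_0(\rang E).
\]
Since $a>0$, every element effective in $\overline E$ remains effective in $\overline E\otimes\overline L_a$, whence $\widehat h^0(\overline E)\le \widehat h^0(\overline E\otimes\overline L_a)$; letting $a$ decrease to $\log|\Delta_K|$ finishes the proof. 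The clean constant comes out because the hypothesis $\widehat{\mu}_{\min}(\overline E)\ge 0$ allows $a$ to approach $\log|\Delta_K|$ from above, and because $\overline L_a$ has underlying module $\mathcal O_K$ so that the effective-section comparison after twisting is immediate.
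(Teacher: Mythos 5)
Your proof is correct, and parts (2) and (3) follow the paper's argument essentially verbatim: reduce (2) to (1) via the Gillet--Soulé inequality applied to $\overline\omega_{\mathcal O_K}\otimes\overline E^\vee$, and obtain (3) by twisting by $\overline L_a$ with $a=\log|\Delta_K|+\varepsilon$ and letting $\varepsilon\to 0$. The only place you diverge is part (1): where the paper produces an injection $\varphi:\overline L_0\to\overline E$ of nonpositive height from an effective section and invokes the slope inequality to conclude $\widehat{\mu}_{\max}(\overline E)\ge 0$, you instead saturate $\mathcal O_K s$ to a Hermitian sub-line bundle $\overline{\mathcal L}$ and bound $\widehat{\deg}(\overline{\mathcal L})\ge 0$ directly from the two nonnegative terms in the Arakelov degree formula. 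These are morally the same step unwound — the slope inequality for a map from a rank-one source is exactly this degree computation in disguise — so yours is a marginally more hands-on version that avoids citing the slope inequality as a black box. Both are clean; nothing is gained or lost beyond the choice of what to take as known.
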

\begin{proof}
1) Assume that $\overline E$ has an effective section.
There then exists a homomorphism $\varphi:\overline
L_0\rightarrow\overline E$ whose height is negative or
zero. By slope inequality, we obtain
$\widehat{\mu}_{\max}(\overline E)\ge 0$.

2) Since $\widehat{\mu}_{\min}(\overline
E)>\log|\Delta_K|$, we have
$\widehat{\mu}_{\max}(\overline\omega_{\mathcal
O_K}\otimes\overline E^\vee)<0$. By 1),
$\widehat{h}^0(\overline\omega_{\mathcal
O_K}\otimes\overline E^\vee)=0$. Thus the desired
inequality results from \eqref{EqU:RRarithmetique}.

3) Let $a=\log|\Delta_K|+\varepsilon$ with
$\varepsilon>0$. Then $\widehat{\mu}_{\min}(\overline
E\otimes\overline L_a )>\log|\Delta_K|$. By 2),
$\widehat{h}^0(\overline E\otimes\overline
L_a)\le\widehat{\deg}(\overline E\otimes\overline
L_a)+C_0(\rang E)=\widehat{\deg}(\overline E)+a\rang
E+C_0(\rang E)$. Since $a>0$, $\widehat{h}^0(\overline E
)\le\widehat{h}^0(\overline E\otimes\overline L_a)$. So
we obtain $\widehat{h}^0(\overline
E)-\widehat{\deg}(\overline E)\le a\rang E+C_0(\rang E)$.
Moreover, \eqref{EqU:RRarithmetique} implies
$\widehat{h}^0(\overline E)-\widehat{\deg}(\overline
E)\ge\widehat{h}^0(\overline\omega_{\mathcal
O_K}\otimes\overline E^\vee)-C_0(\rang E)\ge -C_0(\rang E
)$. Therefore, we always have
$\big|\widehat{h}^0(\overline E)-\widehat{\deg}(\overline
E)\big|\le a\rang E+C_0(\rang E)$. Since $\varepsilon$ is
arbitrary, we obtain the desired inequality.
\end{proof}

\begin{prop}\label{Pro:comparaos}
The following inequality holds:
\begin{equation}\label{Equ:difference de h0 et degplus}
\big|\widehat{h}^0(\overline
E)-\widehat{\deg}_+(\overline E)\big|\le\rang
E\log|\Delta_K|+C_0(\rang E).
\end{equation}
\end{prop}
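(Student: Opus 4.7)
My plan is to exploit the Harder--Narasimhan filtration to split $\overline E$ into a ``non-negative slope'' part carrying all of $\widehat{h}^0$ and a ``negative slope'' part that is cohomologically invisible, and then apply Lemma \ref{Lem:consequence de RRA}.

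More precisely, let $E = E_0 \supset E_1 \supset \cdots \supset E_n = 0$ be the Harder--Narasimhan flag of $\overline E$. Since the polygon $P_{\overline E}$ is concave with vertices $(\rang E_i, \widehat{\deg}(\overline E_i))$ and successive slopes $\widehat{\mu}(\overline E_i/\overline E_{i+1})$ strictly increasing in $i$, there is a unique index $i_0 \in \{0,1,\dots,n\}$ such that $\widehat{\mu}(\overline E_j/\overline E_{j+1}) \ge 0$ for $j \ge i_0$ and $\widehat{\mu}(\overline E_j/\overline E_{j+1}) < 0$ for $j < i_0$. By construction the maximum of $P_{\overline E}$ is attained at $\rang E_{i_0}$, so
\[
\widehat{\deg}_+(\overline E) = \widehat{\deg}(\overline E_{i_0}).
\]
Set $\overline E' := \overline E_{i_0}$ (saturated Hermitian subbundle with induced metric) and $\overline E'' := \overline E/\overline E_{i_0}$ (equipped with the quotient metric). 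Because the HN flag restricts (resp.\ projects) to the HN flags of $\overline E'$ and $\overline E''$, we have $\widehat{\mu}_{\min}(\overline E') \ge 0$ (if $E'\neq 0$) and $\widehat{\mu}_{\max}(\overline E'') < 0$ (if $E''\neq 0$).

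Then I would combine three ingredients. First, the short exact sequence $0\to\overline E'\to\overline E\to\overline E''\to 0$ together with the subadditivity recalled at the start of \S\ref{Sec:positive degree and } gives
\[
\widehat{h}^0(\overline E') \le \widehat{h}^0(\overline E) \le \widehat{h}^0(\overline E') + \widehat{h}^0(\overline E'').
\]
Second, Lemma \ref{Lem:consequence de RRA}~1) applied to $\overline E''$ yields $\widehat{h}^0(\overline E'')=0$, so in fact $\widehat{h}^0(\overline E) = \widehat{h}^0(\overline E')$. Third, Lemma \ref{Lem:consequence de RRA}~3) applied to $\overline E'$ gives
\[
\bigl|\widehat{h}^0(\overline E') - \widehat{\deg}(\overline E')\bigr| \le \rang E'\,\log|\Delta_K| + C_0(\rang E').
\]
Replacing $\widehat{\deg}(\overline E')$ by $\widehat{\deg}_+(\overline E)$ and using that $C_0$ is increasing and $\rang E' \le \rang E$ yields the claimed inequality \eqref{Equ:difference de h0 et degplus}.

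The only mildly delicate point is verifying that the ``positive part'' $\overline E_{i_0}$ of the HN filtration really computes $\widehat{\deg}_+(\overline E)$, i.e.\ that the maximum of the concave polygon is attained exactly at the vertex where the successive slopes switch sign; the degenerate cases $i_0=0$ (all slopes negative, so $\overline E'=0$, $\widehat{\deg}_+=0$ and $\widehat{h}^0(\overline E)=0$) and $i_0=n$ (all slopes non-negative, so $\overline E''=0$ and Lemma \ref{Lem:consequence de RRA}~3) applies directly) must be handled separately but are both immediate.
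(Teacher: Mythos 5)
Your proof is essentially identical to the paper's: both locate the vertex $\overline E_{i_0}$ of the Harder--Narasimhan flag where the polygon attains its maximum (the paper calls this index $j$), show $\widehat{h}^0(\overline E/\overline E_{i_0})=0$ via Lemma \ref{Lem:consequence de RRA}~1) so that $\widehat{h}^0(\overline E)=\widehat{h}^0(\overline E_{i_0})$, and then apply Lemma \ref{Lem:consequence de RRA}~3) to $\overline E_{i_0}$. Note only that your closing remark swaps the degenerate cases: since the successive slopes are \emph{increasing} in $i$ and $\overline E'=\overline E_{i_0}$, the case $i_0=0$ means all slopes are non-negative (so $\overline E'=\overline E$ and $\overline E''=0$), while $i_0=n$ means all slopes are negative (so $\overline E'=0$).
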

\begin{proof}
Let the Harder-Narasimhan flag of $\overline E$ be as in
\eqref{Equ:HNflag}. For any integer $i$ such that $0\le
i\le n-1$, let $\alpha_i=\widehat{\mu}(\overline
E_i/\overline E_{i+1})$. Let $j$ be the first index in
$\{0,\cdots,n-1\}$ such that $\alpha_j\ge 0$; if such
index does not exist, let $j=n$. By definition,
$\widehat{\deg}_+(\overline E)=\widehat{\deg}(\overline
E_j )$. Note that, if $j>0$, then
$\widehat{\mu}_{\max}(\overline E/\overline
E_j)=\alpha_{j-1}<0$. Therefore we always have
$h^0(\overline E/\overline E_j)=0$ and hence
$\widehat{h}^0(\overline E)=\widehat{h}^0(\overline
E_j)$.

If $j=n$, then $\widehat{h}^0(\overline
E_j)=0=\widehat{\deg}_+(\overline E)$. Otherwise
$\widehat{\mu}_{\min}(\overline E_j)=\alpha_j\ge 0$ and
by Lemma \ref{Lem:consequence de RRA} 3), we obtain
\[\big|\widehat{h}^0(\overline E_j)-\widehat{\deg}(\overline E_j)\big|
\le\rang E_j\log|\Delta_K|+C_0(\rang E_j)\le \rang
E\log|\Delta_K|+C_0(\rang E).\]
\end{proof}

\section{Asymptotic polygon of a big line bundle}
\label{Sec:asymptotoci} Let $k$ be a field and
$B=\bigoplus_{n\ge 0}B_n$ be an integral graded
$k$-algebra such that, for $n$ sufficiently positive,
$B_n$ is non-zero and has finite rank. Let $f:\mathbb
N^*\rightarrow\mathbb R_+$ be a mapping such that
$\displaystyle\lim_{n\rightarrow\infty}f(n)/n=0$. Assume
that each vector space $B_n$ is equipped with an $\mathbb
R$-filtration $\mathcal F^{(n)}$ such that $B$ is {\it
$f$-quasi-filtered} (cf. \cite{Chen08} \S3.2.1). In other
words, we assume that there exists $n_0\in\mathbb N^*$
such that, for any integer $r\ge 2$ and all homogeneous
elements $x_1,\cdots,x_r$ in $B$ respectively of degree
$n_1,\cdots,n_r$ in $\mathbb N_{\ge n_0}$, one has
\[ \lambda(x_1\cdots
x_r)\ge\sum_{i=1}^r\Big(\lambda(x_i)-f(n_i)\Big).\] We
suppose in addition that $\sup_{n\ge
1}\lambda_{\max}(B_n)/n<+\infty$. Recall below some
results in \cite{Chen08} (Proposition 3.2.4 and Theorem
3.4.3).
\begin{prop}\label{Pro:convergence des mesures}
\begin{enumerate}[1)]
\item The sequence $(\lambda_{\max}(B_n)/n)_{n\ge 1}$
converges in $\mathbb R$.
\item If $B$ is finitely generated, then the
sequence of measures $(T_{\frac 1n}\nu_{B_n})_{n\ge 1}$
converges vaguely to a Borel probability measure on
$\mathbb R$.
\end{enumerate}
\end{prop}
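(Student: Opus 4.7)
For part (1), set $a_n := \lambda_{\max}(B_n)$, which for $n$ large is attained by some $x_n \in B_n \setminus \{0\}$ since each filtration has only finitely many jumps. By integrality of $B$, the product $x_m x_n$ is a nonzero element of $B_{m+n}$, so the $f$-quasi-filtered hypothesis yields
\[
a_{m+n} \;\geq\; \lambda(x_m x_n) \;\geq\; a_m + a_n - f(m) - f(n) \qquad (m, n \geq n_0).
\]
Together with $\sup_n a_n/n < +\infty$ and $f(n)/n \to 0$, this approximate superadditivity gives convergence of $a_n/n$ by a Fekete-type argument: fix $m$ with $a_m/m$ close to $\limsup a_n/n$; for $n = qm + r$ with $0 \leq r < m$, iterate the inequality to obtain $a_n \geq q\,a_m - O(q\,f(m))$ plus a bounded boundary term; divide by $n$ and send $n \to \infty$, then $m \to \infty$.

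For part (2), I would exploit finite generation to compare $\nu_{B_n}$ with an explicit measure on a symmetric power. After a Veronese-type reduction to the case where $B$ is generated in a single degree, $B_n$ is the image of the multiplication $\pi_n : \mathrm{Sym}^n B_1 \twoheadrightarrow B_n$. If $\alpha_1, \ldots, \alpha_d$ denote the jumps of the filtration on $B_1$, then the symmetric power carries the explicit empirical measure
\[
\nu_{\mathrm{Sym}^n B_1} \;=\; \frac{1}{\dim \mathrm{Sym}^n B_1} \sum_{\mathbf m : \, \sum m_i = n} \delta_{\sum m_i \alpha_i}.
\]
After rescaling by $T_{1/n}$ the values $m_i/n$ equidistribute on the standard simplex, and Riemann-sum convergence identifies the vague limit as the push-forward of the normalized Lebesgue measure on the simplex under $(\xi_1, \ldots, \xi_d) \mapsto \sum_i \xi_i \alpha_i$. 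The $f$-quasi-filtered hypothesis then shows that $\nu_{B_n}$ and the push-forward of $\nu_{\mathrm{Sym}^n B_1}$ along $\pi_n$ differ by at most a translation of size $O(f(n))$ in the $\prec$-ordering, and since $f(n)/n \to 0$ the rescaled measures share the same vague limit.

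The main obstacle is the two-sided comparison of $\nu_{B_n}$ with the symmetric-power measure. The $f$-quasi-filtered property supplies only the direction $\lambda(\pi_n(y)) \geq \lambda(y) - O(f(n))$; the reverse bound, needed to control $\nu_{B_n}$ from above, requires exploiting the surjectivity of $\pi_n$ together with a dimension count on each filtration piece, in the spirit of Proposition \ref{Pro:inegalite des pente}. Uniform tightness of $(T_{1/n}\nu_{B_n})$, necessary to promote convergence on test functions to vague convergence, follows from part (1) (upper bound on $\lambda_{\max}/n$) together with a symmetric lower bound on $\lambda_{\min}(B_n)/n$ obtained by applying the product construction to quotients.
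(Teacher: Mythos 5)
Note first that the paper does not prove this proposition: it is explicitly recalled from \cite{Chen08} (Proposition~3.2.4 and Theorem~3.4.3), so there is no in-paper argument to compare your proposal against. I therefore assess it on its own merits.

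Part~1) is fine. Choosing $x_n \in B_n$ realizing $\lambda_{\max}(B_n)$ and using integrality of $B$, the $f$-quasi-filtered hypothesis gives the approximate superadditivity you state (for $m,n\ge n_0$), and the Fekete-type iteration, together with $\sup_n a_n/n<\infty$ and $f(n)/n\to 0$, does yield convergence of $a_n/n$. This is the expected argument.

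Part~2) is where the real work lies, and your sketch does not close the gap that you yourself flag. Two points are problematic. First, the magnitude of the shift. After reduction to generation in degree one, a monomial $e^{\mathbf m}\in\mathrm{Sym}^n B_1$ maps to a product of $n$ elements of $B_1$, so the $f$-quasi-filtered hypothesis gives $\lambda(\pi_n(e^{\mathbf m}))\ge\lambda_{\mathrm{Sym}}(e^{\mathbf m})-n\,f(1)$: the translation is of size $O(n\,f(1))$, not $O(f(n))$. After rescaling by $T_{1/n}$ this is a \emph{constant} shift $f(1)$, which does not vanish; it can only be made small by passing to a Veronese $B^{(p)}$ with $p$ large and then taking $p\to\infty$, a bootstrap your plan does not carry out. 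Second, and more seriously, the two-sided comparison genuinely fails from the hypotheses. The $f$-quasi-filtered condition controls products only from below, so the given filtration $\mathcal F^{(n)}$ on $B_n$ dominates the quotient filtration from $\mathrm{Sym}^n B_1$ (up to the shift above), but nothing prevents $\mathcal F^{(n)}$ from assigning arbitrarily larger $\lambda$ to elements than the quotient filtration does. Your appeal to surjectivity ``in the spirit of Proposition~\ref{Pro:inegalite des pente}'' does not help: that proposition bounds $\nu_{\overline F}$ from \emph{below} in the $\succ$-order for an \emph{injective} $\varphi:E\to F$, and in any case the relation coming from the exact sequence $0\to K_n\to\mathrm{Sym}^n B_1\to B_n\to 0$ via \eqref{Equ:suite exacte es mesure} is the convex combination $\nu_{\mathrm{Sym}^n B_1}=\theta\nu_{K_n}+(1-\theta)\nu_{B_n}$, not a $\succ$-inequality, so the simplex-equidistribution limit of $T_{1/n}\nu_{\mathrm{Sym}^n B_1}$ has no reason to equal the limit of $T_{1/n}\nu_{B_n}$. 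As written, your proposal proves neither tightness nor convergence of the rescaled measures, and a genuinely different device (as in \cite{Chen08}) is needed to supply the missing upper control.
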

In this section, we shall generalize the second assertion
of Proposition \ref{Pro:convergence des mesures} to the
case where the algebra $B$ is given by global sections of
tensor power of a big line bundle on a projective
variety.

\subsection{Convergence of measures} Let $X$ be an integral projective scheme of
dimension $d$ defined over $k$ and $L$ be a {\it big}
invertible $\mathcal O_X$-module: recall that an
invertible $\mathcal O_X$-module $L$ is said to be {big}
if its {\it volume}, defined as
\[\mathrm{vol}(L):=\limsup_{n\rightarrow\infty}\frac{\rang_kH^0(X,L^{\otimes n})
}{n^d/d!},\] is strictly positive.

\begin{theo}\label{Thm:convergence de polygon general}
With the above notation, if $B=\bigoplus_{n\ge
0}H^0(X,L^{\otimes n})$, then the sequence of measures
$(T_{\frac 1n}\nu_{B_n})_{n\ge 1}$ converges vaguely to a
probability measure on $\mathbb R$.
\end{theo}
\begin{proof} For any integer $n\ge 1$, denote by $\nu_n$
the measure $T_{\frac 1n}\nu_{B_n}$. Since $L$ is big,
for sufficiently positive $n$, $H^0(X,L^{\otimes
n})\neq 0$, and hence $\nu_n$ is a probability measure.
In addition, Proposition \ref{Pro:convergence des
mesures} 1) implies that the supports of the measures
$\nu_n$ are uniformly bounded from above. After
Fujita's approximation theorem (cf.
\cite{Fujita94,Takagi07}, see also \cite{LazarsfeldII}
Ch. 11), the volume function $\mathrm{vol}(L)$ is in
fact a limit:
\[\mathrm{vol}(L)=\lim_{n\rightarrow\infty}\frac{\rang_kH^0(
X,L^{\otimes n})}{n^d/d!}.\] Furthermore, for any real
number $\varepsilon$, $0<\varepsilon<1$, there exists an
integer $p\ge 1$ together with a finitely generated
sub-$k$-algebra $A^\varepsilon$ of
$B^{(p)}=\bigoplus_{n\ge 0}B_{np}$ which is generated by
elements in $B_p$ and such that
\[\lim_{n\rightarrow\infty}
\frac{\rang_kH^0(X,L^{\otimes np})-\rang
A_n^\varepsilon}{\rang_kH^0(X,L^{\otimes
np})}\le\varepsilon.\] The graded $k$-algebra
$A^{\varepsilon}$, equipped with induced filtrations, is
$f$-quasi-filtered. Therefore Proposition
\ref{Pro:convergence des mesures} 2) is valid for
$A^\varepsilon$. In other words, If we denote by
$\nu_n^\varepsilon$ the Borel measure $T_{\frac
1{np}}\nu_{A^{\varepsilon}_n}$, then the sequence of
measures $(\nu_n^\varepsilon)_{n\ge 1}$ converges vaguely
to a Borel probability measure $\nu^\varepsilon$ on
$\mathbb R$. In particular, for any function $h\in
C_c(\mathbb R)$, the sequence of integrals $\big(\int
h\,\mathrm{d}\nu_n^\varepsilon\big)_{n\ge 1}$ is a Cauchy
sequence. This assertion is also true when $h$ is a
continuous function on $\mathbb R$ whose support is
bounded from below: the supports of the measures
$\nu_n^\varepsilon$ are uniformly bounded from above. The
exact sequence
$\xymatrix{0\ar[r]&A_n^\varepsilon\ar[r]&B_{np}\ar[r]&
B_{np}/A_n^\varepsilon\ar[r]&0}$ implies that
\[\nu_{B_{np}}=\frac{\rang A_n^\varepsilon}{\rang B_{np}}
\nu_{A_n^\varepsilon}+\frac{\rang B_{np}-\rang
A_{n}^\varepsilon}{\rang
B_{np}}\nu_{B_{np}/A_n^\varepsilon}.\] Therefore, for any
bounded Borel function $h$, one has
\begin{equation}\label{Equ:diffence of nu et nuvaerpslo}\Big|\int h\,\mathrm{d}\nu_{np}-\frac{\rang
A_n^\varepsilon}{\rang B_{np}} \int
h\,\mathrm{d}\nu_n^\varepsilon\Big|\le\|h\|_{\sup}
\frac{\rang B_{np}-\rang A_n^\varepsilon}{\rang
B_{np}}.\end{equation} Hence, for any bounded continuous
function $h$ satisfying $\inf(\mathrm{supp}(h))>-\infty$,
there exists $N_{h,\varepsilon}\in\mathbb N$ such that,
for any $n,m\ge N_{h,\varepsilon}$,
\begin{equation}\label{Equ:Cauchy sequence}\Big|\int h\,\mathrm{d}\nu_{np}-\int
h\,\mathrm{d}\nu_{mp}\Big| \le
2\varepsilon\|h\|_{\sup}+\varepsilon.\end{equation}

Let $h$ be a smooth function on $\mathbb R$ whose support
is compact. We choose two increasing continuous functions
$h_1$ and $h_2$ such that $h=h_1-h_2$ and that the
supports of them are bounded from below. Let
$n_0\in\mathbb N^*$ suffciently large such that, for any
$r\in\{n_0p+1,\cdots,n_0p+p-1\}$, one has
$H^0(X,L^{\otimes r})\neq 0$. We choose, for such $r$, a
non-zero element $e_r\in H^0(X,L^{\otimes r})$. For any
$n\in\mathbb N$ and any $r\in\{n_0p+1,\cdots,
n_0p+p-1\}$, let $M_{n,r}=e_rB_{np}\subset B_{np+r}$,
$M_{n,r}'=e_{2n_0p+p-r}M_{n,r}\subset B_{(n+2n_0+1)p}$
and denote by $\nu_{n,r}=T_{\frac{1}{np}}\nu_{M_{n,r}}$,
$\nu_{n,r}'=T_{\frac{1}{np}}\nu_{M_{n,r}'}$, where
$M_{n,r}$ and $M_{n,r}'$ are equipped with the induced
filtrations. As the algebra $B$ is $f$-quasi-filtered, we
obtain, by \cite{Chen08} Lemma 1.2.6,
$\nu_{n,r}'\succ\tau_{a_{n,r}}\nu_{n,r}\succ\tau_{b_{n,r}}\nu_{np}$,
where
\[a_{n,r}=\frac{\lambda(e_{2n_0p+p-r})-f(np+r)-f(2n_0p+p-r)}{np},
\; b_{n,r}=a_{n,r}+\frac{\lambda(e_r)-f(np)-f(r)}{np}.\]
This implies \begin{equation}\label{Equ:encadrement}\int
h_i\,\mathrm{d}\nu_{n,r}'\ge\int h_i\,\mathrm{d}
\tau_{a_{n,r}}\nu_{n,r}\ge\int
h_i\,\mathrm{d}\tau_{b_{n,r}}\nu_{np},\quad
i=1,2.\end{equation} In particular,
\begin{equation}\label{Equ:consequenece de
l'encadrement}\Big|\int
h_i\,\mathrm{d}\tau_{a_{n,r}}\nu_{n,r}-\int
h_i\,\mathrm{d}\tau_{b_{n,r}}\nu_{np}\Big|\le \Big| \int
h_i\,\mathrm{d}\nu_{n,r}'-\int
h_i\,\mathrm{d}\tau_{b_{n,r}}\nu_{np}\Big|\end{equation}
As $\displaystyle\lim_{n\rightarrow\infty}\frac{\rang
B_{(n+2n_0+1)p}-\rang B_{np}}{\rang B_{(n+2n_0+1)p}}=0$,
$\displaystyle\lim_{n\rightarrow\infty}\Big|\int
h_i\,\mathrm{d}\nu_{n,r}'-\int
h_i\,\mathrm{d}\nu_{(n+2n_0+1)p}\Big|=0$. Moreover,
$\displaystyle\lim_{n\rightarrow\infty}b_{n,r}=0$. By
\cite{Chen08} Lemma 1.2.10, we obtain
\[\displaystyle\lim_{n\rightarrow\infty}\Big|\int
h_i\,\mathrm{d}\tau_{b_{n,r}}\nu_{np}-\int
h_i\,\mathrm{d}\nu_{np}\Big|=0.\] Therefore,
\[\begin{split}&\quad\;\limsup_{n\rightarrow\infty}\Big|\int h_i\,\mathrm{d}
\nu_{n,r}'-\int
h_i\,\mathrm{d}\tau_{b_{n,r}}\nu_{np}\Big|
\\&=\limsup_{n\rightarrow\infty}\Big|\int
h_i\,\mathrm{d}\nu_{(n+2n_0+1)p}-\int
h_i\,\mathrm{d}\nu_{np}\Big|\le
2\varepsilon\|h_i\|_{\sup}+\varepsilon.\end{split}\] By
\eqref{Equ:consequenece de l'encadrement},
$\displaystyle\limsup_{n\rightarrow\infty}\Big|\int
h_i\,\mathrm{d}\tau_{a_{n,r}}\nu_{n,r}-\int
h_i\,\mathrm{d}\tau_{b_{n,r}}\nu_{np}\Big|\le2\varepsilon\|h_i\|_{\sup}
+\varepsilon$. Note that
\[\lim_{n\rightarrow\infty}\frac{\rang B_{np+r}-\rang B_{np}}
{\rang B_{np+r}}=\lim_{n\rightarrow\infty}a_{n,r}=0.\] So
\[\lim_{n\rightarrow\infty}\Big|\int h_i\,\mathrm{d}\nu_{n,r}
-\int
h_i\,\mathrm{d}\nu_{np+r}\Big|=\lim_{n\rightarrow\infty}
\Big|\int h_i\,\mathrm{d}\nu_{n,r}-\int
h_i\,\mathrm{d}\tau_{a_{n,r}}\nu_{n,r}\Big|=0.\] Hence
\[\limsup_{n\rightarrow\infty}\Big|\int h\,\mathrm{d}\nu_{np+r}
-\int h\,\mathrm{d}\nu_{np}\Big|\le
2\varepsilon(\|h_1\|_{\sup}+\|h_2\|_{\sup})+2\varepsilon.\]
According to \eqref{Equ:Cauchy sequence}, we obtain that
there exists $N_{h,\varepsilon}'\in\mathbb N^*$ such
that, for all integers $l$ and $l'$ such that $l\ge
N_{h,\varepsilon}'$, $l'\ge N_{h,\varepsilon}'$, one has
\[\Big|\int h\,\mathrm{d}\nu_{l}-\int h\,\mathrm{d}
\nu_{l'}\Big|\le
4\varepsilon(\|h_1\|_{\sup}+\|h_2\|_{\sup})+2\varepsilon\|h\|_{\sup}+
6\varepsilon,\] which implies that the sequence $(\int
h\,\mathrm{d}\nu_n)_{n\ge 1}$ converges in $\mathbb R$.

Let $I:C_0^\infty(\mathbb R)\rightarrow\mathbb R$ be the
linear functional defined by
$I(h)=\displaystyle\lim_{n\rightarrow\infty}\int
h\,\mathrm{d}\nu_n$. It extends in a unique way to a
continuous linear functional on $C_c(\mathbb R)$.
Furthermore, it is positive, and so defines a Borel
measure $\nu$ on $\mathbb R$. Finally, by
\eqref{Equ:diffence of nu et nuvaerpslo}, $|\nu(\mathbb
R)-(1-\varepsilon)\nu^{\varepsilon}(\mathbb
R)|\le\varepsilon$. Since $\varepsilon$ is arbitrary,
$\nu$ is a probability measure.
\end{proof}

\subsection{Convergence of maximal values of polygons}

If $\nu$ is a Borel probability measure on $\mathbb R$
and $\alpha\in\mathbb R$, denote by $\nu^{(\alpha)}$ the
Borel probability measure on $\mathbb R$ such that, for
any $h\in C_c(\mathbb R)$,
\[\int h\,\mathrm{d}\nu^{(\alpha)}=\int h(x)1\!\!1_{[\alpha,+\infty[}
(x)\nu(dx)+h(\alpha)\nu(]-\infty,\alpha[).\] The measure
$\nu^{(\alpha)}$ is called the {\it truncation} of $\nu$
at $\alpha$. The truncation operator preserves the order
``$\succ$''.

Assume that the support of $\nu$ is bounded from above.
The truncation of $\nu$ at $\alpha$ modifies the
``polygon'' $ P(\nu)$ only on the part with derivative
$<\alpha$. More precisely, one has
\[P(\nu)=P(\nu^{(\alpha)})\text{ on }\{t\in[0,1[\,\big|\,F_\nu^*(t)\ge\alpha\}.\]
In particular, if $\alpha\le0$, then
\begin{equation}\label{Equ:max p invariant}
\max_{t\in[0,1[}P(\nu)(t)=\max_{t\in[0,1[}P(\nu^{(\alpha)})(t).\end{equation}

The following proposition shows that given a vague
convergence sequence of Borel probability measures,
almost all truncations preserve vague limit.
\begin{prop}
Let $(\nu_n)_{n\ge 1}$ be a sequence of Borel probability
measures which converges vaguely to a Borel probability
measure $\nu$. Then there exists a countable subset $Z$
of $\mathbb R$ such that, for any $\alpha\in\mathbb
R\setminus Z$, the sequence $(\nu_n^{(\alpha)})_{n\ge 1}$
converges vaguely to $\nu^{(\alpha)}$.
\end{prop}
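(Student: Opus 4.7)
The natural candidate for the exceptional set is $Z := \{\alpha \in \mathbb R \mid \nu(\{\alpha\}) > 0\}$, the set of atoms of $\nu$. This set is countable, since for each $k \in \mathbb N^*$ the set $\{\alpha : \nu(\{\alpha\}) > 1/k\}$ contains at most $k$ elements (because $\nu$ is a probability measure), and $Z$ is the countable union of these.

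Fix $\alpha \in \mathbb R \setminus Z$ and $h \in C_c(\mathbb R)$. Unwinding the defining formula gives
\[
\int h\,\mathrm d\nu_n^{(\alpha)} - \int h\,\mathrm d\nu^{(\alpha)} = \int h\cdot 1_{[\alpha,+\infty[}\,\mathrm d(\nu_n-\nu) + h(\alpha)\bigl(\nu_n(]-\infty,\alpha[) - \nu(]-\infty,\alpha[)\bigr),
\]
so the proof reduces to establishing (a) $\int h\cdot 1_{[\alpha,+\infty[}\,\mathrm d\nu_n \to \int h\cdot 1_{[\alpha,+\infty[}\,\mathrm d\nu$ and (b) $\nu_n(]-\infty,\alpha[) \to \nu(]-\infty,\alpha[)$. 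In both cases the strategy is to sandwich the discontinuous integrand between continuous compactly supported functions $g_k^\pm$ that agree with it outside $I_k := [\alpha - \tfrac{1}{k},\, \alpha+\tfrac{1}{k}]$ and are uniformly bounded by $\|h\|_{\sup}$ (for (a)) or by $1$ (for (b)), apply the hypothesised vague convergence to $g_k^\pm$, and then let $k \to \infty$.

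The main obstacle is controlling the approximation errors $\mu(I_k)$ uniformly for $\mu \in \{\nu, \nu_1, \nu_2, \ldots\}$. For $\nu$ one has $\nu(I_k) \to \nu(\{\alpha\}) = 0$ by the choice $\alpha \notin Z$. For $\nu_n$ the uniform smallness is a Portmanteau-type step: given $\varepsilon > 0$, choose $k$ large and a continuous bump $\psi \in C_c(\mathbb R)$ satisfying $1_{I_k} \le \psi$ and $\int \psi\,\mathrm d\nu < \varepsilon$ (possible because $\alpha$ is not an atom); vague convergence then gives $\limsup_n \nu_n(I_k) \le \lim_n \int \psi\,\mathrm d\nu_n = \int \psi\,\mathrm d\nu < \varepsilon$. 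A minor additional point in (b) is that $]-\infty, \alpha[$ is unbounded, so the continuous approximants must also be cut off at large $|x|$; this is legitimate because $\nu$ being a probability vague limit forces $(\nu_n)$ to be uniformly tight, a standard fact obtained by testing vague convergence against a continuous cutoff of $[-R,R]$ for $R$ chosen so that $\nu([-R,R]) > 1 - \varepsilon$. Combining these estimates yields $\limsup_n\bigl|\int h\,\mathrm d\nu_n^{(\alpha)} - \int h\,\mathrm d\nu^{(\alpha)}\bigr| \le C\varepsilon$ for a constant $C$ depending only on $h$, whence the claim by letting $\varepsilon \to 0$.
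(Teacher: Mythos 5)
Your proof is correct and takes the same essential approach as the paper: you take $Z$ to be the set of atoms of $\nu$, observe it is countable, and then verify the vague convergence of the truncations at non-atomic points. The only difference is that the paper dispatches the convergence itself to a citation (Bourbaki, Int\'egration IV \T5 $\mathrm{n}^\circ 12$ Prop.\ 22 — a Portmanteau-type statement), whereas you reconstruct that argument directly via the sandwich by compactly supported continuous functions together with the tightness of $(\nu_n)$, the latter correctly deduced from the assumption that the vague limit $\nu$ is itself a probability measure.
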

\begin{proof}
Let $Z$ be the set of all points $x$ in $\mathbb R$ such
that $\{x\}$ has a strictly positive mass for the measure
$\nu$. It is a countable set. Then by \cite{Bourbaki65}
IV \S5 $\mathrm{n}^\circ12$ Proposition 22, for any real
number $\alpha$ outside $Z$, $\nu^{(\alpha)}_n$ converges
vaguely to $\nu^\alpha$.
\end{proof}

\begin{coro}\label{Cor:convergence of positive slope}
Under the assumption of Theorem \ref{Thm:convergence de
polygon general}, the sequence \[\displaystyle
\big(\max_{t\in[0,1]}P_{B_n}(t)/n\big)_{n\ge 1}\]
converges in $\mathbb R$.
\end{coro}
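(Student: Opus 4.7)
The plan is to read off the convergence from Theorem~\ref{Thm:convergence de polygon general} by means of the truncation operator. Set $\nu_n := T_{1/n}\nu_{B_n}$; the formula $P(T_\varepsilon\nu)=\varepsilon P(\nu)$ recalled in the preliminaries gives $P(\nu_n)=P_{B_n}/n$, so the corollary amounts to showing that $\max_{t\in[0,1]}P(\nu_n)(t)$ converges in $\mathbb R$. By Theorem~\ref{Thm:convergence de polygon general} the measures $\nu_n$ converge vaguely to some Borel probability measure $\nu$, and by Proposition~\ref{Pro:convergence des mesures}~1) their supports are uniformly bounded above by some constant $M$.

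Since each $P(\nu_n)$ is concave on $[0,1]$ with $P(\nu_n)(0)=0$, its maximum is attained at a point where the derivative $F^*_{\nu_n}$ is non-negative; hence \eqref{Equ:max p invariant} applied with any $\alpha\le 0$ yields
\[
\max_{t\in[0,1]}P(\nu_n)(t)=\max_{t\in[0,1]}P(\nu_n^{(\alpha)})(t),
\]
and the analogous equality for $\nu$. Invoking the proposition that precedes the corollary, one can select $\alpha<0$ outside the associated countable exceptional set, so that $\nu_n^{(\alpha)}\to\nu^{(\alpha)}$ vaguely. These truncated measures are probability measures with support uniformly contained in the compact interval $[\alpha,M]$, so vague convergence here coincides with weak convergence against bounded continuous test functions.

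It remains to upgrade the weak convergence of the $\nu_n^{(\alpha)}$ to uniform convergence of the associated polygons on $[0,1]$. Weak convergence of probability measures implies pointwise convergence $F^*_{\nu_n^{(\alpha)}}(t)\to F^*_{\nu^{(\alpha)}}(t)$ at every continuity point of the limiting quantile, hence almost everywhere on $[0,1]$; the quantile functions being uniformly bounded by $\max(|\alpha|,M)$, dominated convergence yields
\[
P(\nu_n^{(\alpha)})(t)=\int_0^tF^*_{\nu_n^{(\alpha)}}(s)\,\mathrm ds\longrightarrow P(\nu^{(\alpha)})(t)
\]
at every $t\in[0,1]$. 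Each polygon is concave with slope in $[\alpha,M]$, so the family is uniformly Lipschitz, and pointwise convergence strengthens to uniform convergence on $[0,1]$; in particular the maxima converge, to $\max_{t\in[0,1]}P(\nu^{(\alpha)})(t)=\max_{t\in[0,1]}P(\nu)(t)$. The delicate step is this last passage from weak convergence of measures to convergence of the Legendre integrals: the truncation at $\alpha<0$ is precisely what provides the uniform Lipschitz bound needed for dominated convergence, while \eqref{Equ:max p invariant} ensures that truncating does not alter the quantity whose convergence one seeks.
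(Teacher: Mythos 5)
Your proof follows exactly the same route as the paper's: pass to $\nu_n=T_{1/n}\nu_{B_n}$, truncate at some $\alpha<0$ outside the countable exceptional set of the preceding proposition, observe via \eqref{Equ:max p invariant} that truncation leaves the maximum unchanged, and conclude by uniform convergence of the truncated polygons. The only difference is that the paper cites \cite{Chen08}~Proposition~1.2.9 for the passage from vague convergence of uniformly compactly supported probability measures to uniform convergence of their Legendre polygons, whereas you spell out that step yourself (quantile convergence at continuity points, dominated convergence for the integrals $\int_0^t F^*$, and the uniform Lipschitz bound coming from the compact support $[\alpha,M]$) — a correct and self-contained rendering of the cited result, but not a different argument.
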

\begin{proof}
For $n\in\mathbb N^*$, denote by $\nu_n=T_{\frac
1n}\nu_{B_n}$. By Theorem \ref{Thm:convergence de polygon
general}, the sequence $(\nu_n)_{n\ge 1}$ converges
vaguely to a Borel probability measure $\nu$. Let
$\alpha<0$ be a number such that
$(\nu_n^{(\alpha)})_{n\ge 1}$ converges vaguely to
$\nu^{(\alpha)}$. Note that the supports of
$\nu_n^{\alpha}$ are uniformly bounded. So
$P(\nu_n^{(\alpha)})$ converges uniformly to
$P(\nu^{(\alpha)})$ (see \cite{Chen08} Proposition
1.2.9). By \eqref{Equ:max p invariant},
$\displaystyle\big(\max_{t\in[0,1]}P_{B_n}(t)/n\big)_{n\ge
1}$ converges to
$\displaystyle\max_{t\in[0,1]}P(\nu)(t)$.
\end{proof}

If $V$ is a finite dimensional filtered vector space over
$k$, we shall use the expression $\lambda_+(V)$ to denote
$\displaystyle\max_{t\in[0,1]}P_V(t)$. With this
notation, the assertion of Corollary \ref{Cor:convergence
of positive slope} becomes:
$\displaystyle\lim_{n\rightarrow\infty}\lambda_+(B_n)/n$
exists in $\mathbb R$.

\begin{lemm}\label{Lem:positivite de lambdaplus}
Assume that $\nu_1 $ and $\nu_2$ are two Borel
probability measures whose supports are bounded from
above. Let $\varepsilon\in ]0,1[$ and
$\nu=\varepsilon\nu_1+(1-\varepsilon)\nu_2$. Then
\begin{equation}\label{Equ:minormation de volume}\max_{t\in[0,1]}P(\nu)(t)\ge
\varepsilon\max_{t\in[0,1]}P(\nu_1)(t).\end{equation}
\end{lemm}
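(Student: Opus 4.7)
The plan is to reduce the maximum of the polygon to the positive part of the mean, using the truncation operator $\nu\mapsto\nu^{(0)}$ introduced just above the statement. More precisely, I will prove the identity
\[
\max_{t\in[0,1]}P(\nu)(t)=\int_{0}^{+\infty}\nu(]x,+\infty[)\,\mathrm{d}x,
\]
which is linear in $\nu$ and manifestly non-negative. The lemma then follows in one line: decomposing via $\nu=\varepsilon\nu_1+(1-\varepsilon)\nu_2$ gives
\[
\int_0^{+\infty}\nu(]x,+\infty[)\,\mathrm{d}x=\varepsilon\int_0^{+\infty}\nu_1(]x,+\infty[)\,\mathrm{d}x+(1-\varepsilon)\int_0^{+\infty}\nu_2(]x,+\infty[)\,\mathrm{d}x,
\]
and discarding the non-negative $\nu_2$-term yields \eqref{Equ:minormation de volume}.

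To establish the identity I would take $\alpha=0$ in \eqref{Equ:max p invariant}, which gives $\max_tP(\nu)(t)=\max_tP(\nu^{(0)})(t)$. By construction $\nu^{(0)}$ is supported in $[0,+\infty[$, so its generalized inverse $F_{\nu^{(0)}}^*$ is non-negative. Consequently $P(\nu^{(0)})(t)=\int_0^tF_{\nu^{(0)}}^*(s)\,\mathrm{d}s$ is non-decreasing on $[0,1[$, so its supremum is the limit $\int_0^1 F_{\nu^{(0)}}^*(s)\,\mathrm{d}s$ as $t\to1^-$. Since $\nu$ has support bounded from above, $\nu^{(0)}$ has bounded support, and a direct Fubini (layer-cake) argument using $|\{s\in[0,1]:F_{\nu^{(0)}}^*(s)>x\}|=\nu^{(0)}(]x,+\infty[)$ gives
\[
\int_0^1F_{\nu^{(0)}}^*(s)\,\mathrm{d}s=\int_0^{+\infty}\nu^{(0)}(]x,+\infty[)\,\mathrm{d}x=\int_0^{+\infty}\nu(]x,+\infty[)\,\mathrm{d}x,
\]
the last equality because $\nu^{(0)}$ agrees with $\nu$ on $]0,+\infty[$.

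The only mildly delicate point is the passage from $[0,1[$ to the endpoint $t=1$ in order to identify $\max_tP(\nu^{(0)})(t)$ with $\int_0^1F_{\nu^{(0)}}^*$, but this is routine since the boundedness of the supports makes $F_{\nu^{(0)}}^*$ integrable on $[0,1]$ and $P(\nu^{(0)})$ continuous up to $1$. No result beyond the layer-cake formula and the definitions already recalled in \S\ref{SubSec:Reminder on Borel measures} is used.
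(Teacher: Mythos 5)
Your proof is correct, and it takes a genuinely different route from the paper. The paper's own argument is an ordering argument: after truncating $\nu_1,\nu_2$ at $0$ (which, by \eqref{Equ:max p invariant}, changes neither side of \eqref{Equ:minormation de volume}), one has $\nu\succ\varepsilon\nu_1+(1-\varepsilon)\delta_0$ since $\nu_2\succ\delta_0$, whence $P(\nu)\ge P(\varepsilon\nu_1+(1-\varepsilon)\delta_0)$; the latter polygon is computed explicitly as $\varepsilon P(\nu_1)(t/\varepsilon)$ on $[0,\varepsilon]$ and constant $\varepsilon P(\nu_1)(1)$ thereafter, from which the inequality is read off. You instead establish the linear identity
\[
\max_{t\in[0,1]}P(\nu)(t)=\int_0^{+\infty}\nu(]x,+\infty[)\,\mathrm{d}x=\int_{\mathbb R}x_+\,\nu(\mathrm{d}x)
\]
and conclude by linearity and positivity. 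This is clean, and it actually proves more than the lemma asserts: you get the \emph{equality} $\max_tP(\nu)(t)=\varepsilon\max_tP(\nu_1)(t)+(1-\varepsilon)\max_tP(\nu_2)(t)$, not merely the one-sided bound. Your layer-cake step is sound: $|\{s:F_{\nu^{(0)}}^*(s)>x\}|=\nu^{(0)}(]x,+\infty[)$ because $y\mapsto\nu^{(0)}(]y,+\infty[)$ is right-continuous, so $F_{\nu^{(0)}}^*(s)>x$ if and only if $\nu^{(0)}(]x,+\infty[)>s$; and the passage from $\sup_{t<1}$ to the endpoint $t=1$ is justified since $F_{\nu^{(0)}}^*$ is bounded on $[0,1[$. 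It is worth pointing out that the identity you rely on is precisely the lemma proved at the start of \S7 of the paper (via $P(\nu^{(0)})(1)=\int x\,\nu^{(0)}(\mathrm{d}x)$), so your argument uses the same toolkit the paper introduces later; the paper simply chose the more hands-on $\succ$-domination argument for this particular lemma.
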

\begin{proof}
After truncation at $0$ we may assume that the supports
of $\nu_1$ and $\nu_2$ are contained in $[0,+\infty[$. In
this case $\nu\succ
\varepsilon\nu_1+(1-\varepsilon)\delta_0$ and hence $
P(\nu)\ge P(\varepsilon\nu_1+(1-\varepsilon)\delta_0)$.
Since
\[P(\varepsilon \nu_1+(1-\varepsilon)\delta_0)(t)=\begin{cases}
\varepsilon
P(\nu_1)(t/\varepsilon),&t\in[0,\varepsilon],\\
\varepsilon P(\nu_1)(1),&t\in[\varepsilon,1[,
\end{cases}\]
we obtain \eqref{Equ:minormation de volume}.
\end{proof}

\begin{theo}\label{Thm:critere de bigness}
Under the assumption of Theorem \ref{Thm:convergence de
polygon general}, one has
\[\lim_{n\rightarrow\infty}\lambda_+(B_n)/n>0\quad\text{if and only if}
\quad\lim_{n\rightarrow\infty}\lambda_{\max}(B_n)/n>0.\]
Furthermore, in this case, the inequality
$\displaystyle\lim_{n\rightarrow\infty}
\lambda_+(B_n)/n\le\lim_{n\rightarrow\infty}\lambda_{\max}(B_n)/n$
holds.
\end{theo}
\begin{proof}
For any filtered vector space $V$,
$\lambda_{\max}(V)>0$ if and only if $\lambda_+(V)>0$,
and in this case one always has
$\lambda_{\max}(V)\ge\lambda_+(V)$. Therefore the
second assertion is true. Furthermore, this also
implies
\[\lim_{n\rightarrow\infty}\frac 1n\lambda_+(B_n)>0
\Longrightarrow\lim_{n\rightarrow\infty}\frac
1n\lambda_{\max}(B_n)>0.\] It suffices then to prove the
converse implication. Assume that $\alpha>0$ is a real
number such that
$\displaystyle\lim_{n\rightarrow\infty}\lambda_{\max}(B_n)/n>4\alpha$.
Choose sufficiently large $n_0\in\mathbb N$ such that
$f(n)<\alpha n$ for any $n\ge n_0$ and such that there
exists a non-zero $x_0\in B_{n_0}$ satisfying
$\lambda(x_0)\ge 4\alpha n_0$. Since the algebra $B$ is
$f$-quasi-filtered, $\lambda(x_0^m) \ge4\alpha n_0
m-mf(n)\ge 3\alpha mn_0$. By Fujita's approximation
theorem, there exists an integer $p$ divisible by $n_0$
and a subalgebra $A$ of $B^{(p)}=\bigoplus_{n\ge
0}B_{np}$ generated by a finite number of elements in
$B_p$ and such that
$\displaystyle\liminf_{n\rightarrow\infty}\rang A_n/\rang
B_{np}>0$. By possible enlargement of $A$ we may assume
that $A$ contains $x_0^{p/n_0}$. By Lemma
\ref{Lem:positivite de lambdaplus},
$\displaystyle\lim_{n\rightarrow\infty}\lambda_+(A_n)/n>0$
implies
$\displaystyle\lim_{n\rightarrow\infty}\lambda_+(B_{np})/np=
\lim_{n\rightarrow\infty}\lambda_+(B_n)/n>0$. Therefore,
we reduce our problem to the case where
\begin{enumerate}[1)]
\item $B$ is an algebra of finite type generated by
$B_1$,
\item there exists $x_1\in B_1$, $x_1\neq 0$ such that $\lambda(x_1)\ge 3\alpha$
with $\alpha>0$,
\item $f(n)<\alpha n$.
\end{enumerate}
Furthermore, by Noether's normalization theorem, we may
assume that $B=k[x_1,\cdots,x_q]$ is an algebra of
polynomials, where $x_1$ coincides with the element in
condition 2). Note that
\begin{equation}
\label{Equ:minoration de lambda}\lambda(x_1^{a_1}\cdots
x_q^{a_q})\ge\sum_{i=1}^qa_i\big(
\lambda(x_i)-\alpha\big)\ge 2\alpha
a_1+\sum_{i=2}^qa_i\big(\lambda(x_i)-\alpha\big).\end{equation}
Let $\beta>0$ such that $-\beta\le\lambda(x_i)-\alpha$
for any $i\in\{2,\cdots,q\}$. We obtain from
\eqref{Equ:minoration de lambda} that
$\lambda(x_1^{a_1}\cdots x_q^{a_q})\ge\alpha a_1$ as
soon as $a_1\ge\frac\beta\alpha\sum_{i=2}^q a_i$. For
$n\in\mathbb N^*$, let
\[\begin{split}
u_n&=\#\Big\{(a_1,\cdots,a_q)\in\mathbb N^q\,\Big|\,
a_1+\cdots+a_q=n,\;
a_1\ge\frac{\beta}{\alpha}(a_2+\cdots+a_q)
\Big\}\\&=\#\Big\{ (a_1,\cdots,a_q)\in\mathbb
N^q\,\Big|\, a_1+\cdots+a_q=n,\;
a_1\ge\frac{\beta}{\alpha+\beta}n \Big\}\\
&=\binom{n-\lfloor\frac{\beta}{\alpha+\beta}n\rfloor+q-1}{q-1},
\end{split}\]and
\[v_n=\#\big\{(a_1,\cdots,a_q)\in\mathbb N^q\,\big|\,
a_1+\cdots+a_q=n \big\}=\binom{n+q-1}{q-1}.\] Thus
$\displaystyle\lim_{n\rightarrow\infty}u_n/v_n=\Big(\frac{\alpha}
{\alpha+\beta}\Big)^{q-1}>0$, which implies
$\displaystyle\lim_{n\rightarrow\infty}\frac
1n\lambda_+(B_n)>0$ by Lemma \ref{Lem:positivite de
lambdaplus}.
\end{proof}

\section{Volume function as a limit and arithmetic
bigness}\label{Sec:volume est une limite}

Let $\mathcal X$ be an arithmetic variety of dimension
$d$ and $\overline{\mathcal L}$ be a Hermitian line
bundle on $\mathcal X$. Denote by $X=\mathcal X_K$ and
$L=\mathcal L_K$. Using the convergence result
established in the previous section, we shall prove that
the volume function is in fact a limit of normalized
positive degrees. We also give a criterion of arithmetic
bigness by the positivity of asymptotic maximal slope.

\subsection{Volume function and asymptotic positive degree}
For any $n\in\mathbb N$, we choose a Hermitian vector
bundle $\pi_*(\overline{\mathcal L}^{\otimes
n})=(\pi_*(\mathcal L^{\otimes
n}),(\|\cdot\|_\sigma)_{\sigma:K\rightarrow\mathbb C})$
whose underlying $\mathcal O_K$-module is $\pi_*(\mathcal
L^{\otimes n})$ and such that
\[\max_{0\neq s\in \pi_*(\mathcal L^{\otimes n})}\Big|
\log\|s\|_{\sup}-\log\|s\|_\sigma\Big|\ll \log n,\qquad
n>1.\] Denote by $r_n$ the rank of $\pi_*(\mathcal
L^{\otimes n})$. One has $r_n\ll n^{d-1}$. For any
$n\in\mathbb N$, define
\[\widehat{h}^0(\mathcal X,\overline{\mathcal
L}^{\otimes n}):=\log\#\{s\in H^0(\mathcal X,\mathcal
L^{\otimes n})\;|\; \forall\sigma:K\rightarrow\mathbb
C,\,\|s\|_{\sigma,\sup}\le 1\}.\] Recall that the
arithmetic volume function of $\overline{\mathcal L}$
defined by Moriwaki (cf. \cite{Moriwaki07}) is
\[\widehat{\mathrm{vol}}(\overline{\mathcal L}):=
\limsup_{n\rightarrow\infty}\frac{\widehat{h}^0(\mathcal
X,\overline{\mathcal L}^{\otimes n})}{n^d/d!},\] and
$\overline{\mathcal L}$ is said to be big if and only if
$\widehat{\mathrm{vol}}(\overline{\mathcal L})>0$ (cf.
\cite{Moriwaki07} Theorem 4.5 and \cite{Yuan07} Corollary
2.4).

In the following, we give an alternative proof of a
result of Morkwaki and Yuan.
\begin{prop}
If $\overline{\mathcal L}$ is big, then $L$ is big on $X$
in usual sense.
\end{prop}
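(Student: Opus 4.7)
My plan is to argue by contrapositive: assuming $L = \mathcal L_K$ is not big on $X$, I will show that $\widehat{\mathrm{vol}}(\overline{\mathcal L}) = 0$. Since $X$ has dimension $d-1$, the hypothesis that $L$ is not big means $r_n := \rang_K H^0(X, L^{\otimes n}) = \rang_{\mathcal O_K} \pi_*(\mathcal L^{\otimes n}) = o(n^{d-1})$.

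The main idea is to bound $\widehat{h}^0(\mathcal X, \overline{\mathcal L}^{\otimes n})$ in terms of $r_n$ and a linear-in-$n$ bound for $\widehat{\mu}_{\max}$. First, by the choice of Hermitian metric on $\pi_*(\mathcal L^{\otimes n})$ (with $\bigl|\log\|s\|_{\sup}-\log\|s\|_\sigma\bigr|\ll \log n$) the quantity $\widehat{h}^0(\mathcal X, \overline{\mathcal L}^{\otimes n})$ differs from $\widehat{h}^0(\pi_*(\overline{\mathcal L}^{\otimes n}))$ by $O(r_n \log n)$. Applying Proposition \ref{Pro:comparaos} to $\overline E_n := \pi_*(\overline{\mathcal L}^{\otimes n})$ yields
\[
\widehat{h}^0(\mathcal X, \overline{\mathcal L}^{\otimes n}) \le \widehat{\deg}_+(\overline E_n) + r_n \log|\Delta_K| + C_0(r_n) + O(r_n \log n).
\]
Since the Harder--Narasimhan polygon lies below the line of slope $\widehat{\mu}_{\max}$, one has $\widehat{\deg}_+(\overline E_n) \le r_n \cdot \max\bigl(0, \widehat{\mu}_{\max}(\overline E_n)\bigr)$.

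The heart of the proof is to show that there exists a constant $C$ (independent of $n$) such that $\widehat{\mu}_{\max}(\overline E_n) \le C n$. To do this, I would fix an auxiliary ample Hermitian line bundle $\overline{\mathcal A}$ on $\mathcal X$ and an integer $m_0$ together with a non-zero global section $s_0$ of $\mathcal A^{\otimes m_0} \otimes \mathcal L^{\vee}$. Multiplication by $s_0^{\otimes n}$ yields an injective $K$-linear map $\varphi_n : H^0(X, L^{\otimes n}) \hookrightarrow H^0(X, A^{\otimes m_0 n})$ whose height $h(\varphi_n)$ is bounded linearly in $n$ (with constant depending on $s_0$ and the chosen metrics). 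The slope inequality (Proposition recalled in \S2.5) then gives
\[
\widehat{\mu}_{\max}(\overline E_n) \le \widehat{\mu}_{\max}\bigl(\pi_*(\overline{\mathcal A}^{\otimes m_0 n})\bigr) + h(\varphi_n),
\]
and for the ample line bundle $\overline{\mathcal A}$ the first term is $O(n)$ by a standard argument (Minkowski's theorem applied to a basis of global sections whose norms are controlled because $\overline{\mathcal A}$ is ample, or directly via arithmetic Hilbert--Samuel for $\overline{\mathcal A}$).

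Combining these estimates, one obtains $\widehat{h}^0(\mathcal X, \overline{\mathcal L}^{\otimes n}) \ll r_n \cdot n + r_n \log r_n = o(n^{d-1}) \cdot n = o(n^d)$, and therefore $\widehat{\mathrm{vol}}(\overline{\mathcal L}) = 0$, contradicting arithmetic bigness. The main obstacle is the linear-in-$n$ upper bound on $\widehat{\mu}_{\max}(\pi_*(\overline{\mathcal L}^{\otimes n}))$, which is the only step that requires an input beyond the algebraic machinery developed in the previous sections; everything else is a direct application of Proposition \ref{Pro:comparaos} and the slope inequality.
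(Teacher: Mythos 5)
Your proof is correct and follows essentially the same route as the paper: both compare $\widehat{h}^0(\mathcal X, \overline{\mathcal L}^{\otimes n})$ to $\widehat{\deg}_+(\pi_*(\overline{\mathcal L}^{\otimes n}))$ via Proposition~\ref{Pro:comparaos} together with the $O(r_n\log n)$ distortion between sup and Hermitian norms, and both ultimately rest on the linear bound $\widehat{\mu}_{\max}(\pi_*(\overline{\mathcal L}^{\otimes n})) = O(n)$. The only differences are cosmetic: you argue contrapositively rather than directly, and you spell out the ample-twist plus slope-inequality argument for the linear $\widehat{\mu}_{\max}$ bound, which the paper invokes as a known fact from the quasi-filtered-algebra framework of \cite{Chen08}.
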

\begin{proof}
For any integer $n\ge 1$, we choose two Hermitian vector
bundles $\overline E_n^{(1)}=(\pi_*(\mathcal L^{\otimes
n}),(\|\cdot\|_\sigma^{(1)})_{\sigma:K\rightarrow\mathbb
C})$ and $\overline E_n^{(2)}=(\pi_*(\mathcal L^{\otimes
n}),(\|\cdot\|_\sigma^{(2)})_{\sigma:K\rightarrow\mathbb
C})$ such that \[\|s\|_{\sigma}^{(1)}\le
\|s\|_{\sigma,\sup} \le\|s\|_{\sigma}^{(2)}\le
r_n\|s\|_\sigma^{(1)},\] where $r_n$ is the rank of
$\pi_*(\mathcal L^{\otimes n})$. This is always possible
due to an argument of John and L\"owner ellipsoid, see
\cite{Gaudron07} definition-theorem 2.4. Thus
$\widehat{h}^0(\overline
E_n^{(2)})\le\widehat{h}^0(\mathcal X,\overline{\mathcal
L}^{\otimes n})\le\widehat{h}^0(\overline E_n^{(1)})$.
Furthermore, by \cite{Chen08} Corollay 2.2.9,
$\big|\widehat{\deg}_+(\overline
E_n^{(1)})-\widehat{\deg}_+(\overline E_n^{(2)})\big|\le
r_n\log r_n $. By \eqref{Equ:difference de h0 et
degplus}, we obtain \[\big|\widehat{h}^0(X,\overline
L^{\otimes n})-\widehat{h}^0( \overline
E_n^{(1)})\big|\le 2r_n\log|\Delta_K|+2C_0(r_n)+r_n\log
r_n.\] Furthermore, $\big|\widehat{\deg}_+(\overline
E_n^{(1)})-\widehat{\deg}_+(\pi_*(\overline{\mathcal
L}^{\otimes n }))\big|=O(r_n\log r_n)$. Hence
\[\big|\widehat{h}^0(X,\overline{\mathcal L}^{\otimes
n})-\widehat{h}^0(\pi_*(\overline{\mathcal L}^{\otimes n
}))\big|=O(r_n\log r_n).\] Since $r_n\ll n^{d-1}$, we
obtain
\begin{equation}\label{Equ:comparason de deg et h}\displaystyle\lim_{n\rightarrow\infty}
\bigg| \frac{\widehat{h}^0(\mathcal X,\overline{\mathcal
L}^{\otimes n})}{n^d/d!}-
\frac{\widehat{\deg}_+(\pi_*(\overline{\mathcal
L}^{\otimes n}) )}{n^d/d!}\bigg|=0,
\end{equation}
and therefore $\displaystyle
\widehat{\mathrm{vol}}(\overline{\mathcal L}
)=\limsup_{n\rightarrow\infty}\frac{\widehat{\deg}_+(
\pi_*(\overline{\mathcal L}^{\otimes n}) )}{n^d/d!}$. If
$\overline{\mathcal L}$ is big, then
$\widehat{\mathrm{vol}}(\overline{\mathcal L} )>0$, and
hence $\pi_*(\mathcal L^{\otimes n})\neq 0$ for $n$
sufficiently positive. Combining with the fact that
\[\displaystyle\limsup_{n\rightarrow+\infty}
\frac{\widehat{\deg}_+(\pi_*(\mathcal L^{\otimes n
}))}{nr_n}\le
\lim_{n\rightarrow+\infty}\frac{\widehat{\mu}_{\max} (
\pi_*(\overline{\mathcal L}^{\otimes n}))}{n}<+\infty,\]
we obtain $\displaystyle\limsup_{n\rightarrow+\infty}
\frac{r_n}{n^{d-1}}>0$.
\end{proof}

\begin{theo}
\label{Thm:volume comme une limit} The following
equalities hold:
\begin{equation}\label{Equ:vol comm limet}\widehat{\mathrm{vol}}(\overline{\mathcal L})=
\lim_{n\rightarrow\infty} \frac{\widehat{h}^0(\mathcal
X,\overline{\mathcal L}^{\otimes
n})}{n^{d}/d!}=\lim_{n\rightarrow\infty}
\frac{\widehat{\deg}_+( \pi_*(\overline{\mathcal
L}^{\otimes
n}))}{n^d/d!}=\mathrm{vol}(L)\lim_{n\rightarrow\infty}
\frac{\widehat{\mu}_+(\pi_*(\overline{\mathcal
L}^{\otimes n}))}{n/d},\end{equation} where the positive
slope $\widehat{\mu}_+$ was defined in
\S\ref{Sec:positive degree and }.
\end{theo}
\begin{proof}
We first consider the case where $L$ is big. The graded
algebra $B=\bigoplus_{n\ge 0}H^0(X,L^{\otimes n})$
equipped with Harder-Narasimhan filtrations is
quasi-filtered for a function of logarithmic increasing
speed at infinity (see \cite{Chen08} \S4.1.3). Therefore
Corollary \ref{Cor:convergence of positive slope} shows
that the sequence $(\lambda_+(B_n)/n)_{n\ge 1}$ converges
in $\mathbb R$. Note that
$\lambda_+(B_n)=\widehat{\mu}_+(\pi_*(\overline{\mathcal
L}^{\otimes n}))$. So the last limit in \eqref{Equ:vol
comm limet} exists. Furthermore, $L$ is big on $X$, so
\[\mathrm{vol}(L)=\lim_{n\rightarrow\infty}\frac{\rang(\pi_*(\overline{\mathcal
L}^{\otimes n}))}{n^{d-1}/(d-1)!},\] which implies the
existence of the third limit in \eqref{Equ:vol comm
limet} and the last equality. Thus the existence of the
first limit and the second equality follow from
\eqref{Equ:comparason de deg et h}.

When $L$ is not big, since
\[\lim_{n\rightarrow\infty}
\frac{\widehat{\mu}_+(\pi_*(\overline{\mathcal
L}^{\otimes n}))}{n/d}\le \lim_{n\rightarrow\infty}
\frac{\widehat{\mu}_{\max}(\pi_*(\overline{\mathcal
L}^{\otimes n}))}{n/d}<+\infty\] the last term in
\eqref{Equ:vol comm limet} vanishes. This implies the
vanishing of the second limit in \eqref{Equ:vol comm
limet}. Also by \eqref{Equ:comparason de deg et h}, we
obtain the vanishing of the first limit.

\end{proof}

\begin{coro}\label{Cor:HS moriwaki}
The following relations hold:
\begin{equation}\label{Equ:HS moriwar}\widehat{\mathrm{vol}}(\overline{\mathcal L})\ge
\limsup_{n\rightarrow\infty} \frac{\widehat{\deg}(
\pi_*(\overline{\mathcal L}^{\otimes
n}))}{n^d/d!}=\limsup_{n\rightarrow\infty}\frac{\chi(
\pi_*(\overline{\mathcal L}^{\otimes
n}))}{n^d/d!}.\end{equation}
\end{coro}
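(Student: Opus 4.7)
The corollary splits into two claims: a lower bound for $\widehat{\mathrm{vol}}(\overline{\mathcal L})$ by a \emph{limsup} of normalized Arakelov degrees, and an equality between this \emph{limsup} and the corresponding one for Euler--Poincar\'e characteristics. I would treat them separately, and both parts should follow in a few lines once the previous results of the section are in hand.

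For the inequality, the plan is to quote Theorem~\ref{Thm:volume comme une limit} directly, which identifies
\[\widehat{\mathrm{vol}}(\overline{\mathcal L})=\lim_{n\to\infty}\frac{\widehat{\deg}_+(\pi_*(\overline{\mathcal L}^{\otimes n}))}{n^d/d!}.\]
It then suffices to observe the universal pointwise bound $\widehat{\deg}_+(\overline E)\ge \widehat{\deg}(\overline E)$ for every non-zero Hermitian vector bundle $\overline E$ on $\Spec\mathcal O_K$: this is immediate from the definition $\widehat{\deg}_+(\overline E)=\max_{t\in[0,1]}P_{\overline E}(t)$ together with the fact that $P_{\overline E}(1)=\widehat{\deg}(\overline E)$ is among the values being maximised. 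Applying this to $\overline E=\pi_*(\overline{\mathcal L}^{\otimes n})$ and passing to the \emph{limsup} yields the first inequality of \eqref{Equ:HS moriwar}.

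For the equality, the idea is to use the standard affine relation between $\chi$ and $\widehat{\deg}$ on Hermitian vector bundles over $\Spec\mathcal O_K$: there exists a constant $c_K$ depending only on $K$ (essentially $\tfrac12\log|\Delta_K|$, up to the normalisation convention for $\chi$) such that
\[\bigl|\chi(\overline E)-\widehat{\deg}(\overline E)\bigr|\le c_K\,\rang E\]
for every such $\overline E$. Applied to $\overline E=\pi_*(\overline{\mathcal L}^{\otimes n})$, whose rank satisfies $r_n\ll n^{d-1}$ (as recalled at the start of \S\ref{Sec:volume est une limite}), this produces
\[\bigl|\chi(\pi_*(\overline{\mathcal L}^{\otimes n}))-\widehat{\deg}(\pi_*(\overline{\mathcal L}^{\otimes n}))\bigr|=O(n^{d-1}),\]
which is $o(n^d)$ after normalisation by $n^d/d!$. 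Hence the two \emph{limsups} in \eqref{Equ:HS moriwar} coincide.

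The only mild obstacle is to fix the precise normalisation of $\chi$ with which the statement is to be read; this determines the exact value of $c_K$ but not the order of the error, so it does not affect the argument. Modulo that bookkeeping, both parts of the corollary are direct consequences of Theorem~\ref{Thm:volume comme une limit} and the elementary bound $\widehat{\deg}_+\ge\widehat{\deg}$.
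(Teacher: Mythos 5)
Your proof is correct and follows the same route as the paper: both reduce the inequality to Theorem~\ref{Thm:volume comme une limit} together with the elementary bound $\widehat{\deg}_+(\overline E)\ge\widehat{\deg}(\overline E)$, and both reduce the equality to the classical comparison between Arakelov degree and Euler--Poincar\'e characteristic. You merely unpack the two citations the paper makes (noting that $\widehat{\deg}(\overline E)$ is the endpoint value of the Harder--Narasimhan polygon, and that the $\chi$--$\widehat{\deg}$ discrepancy is $O(\rang E)=O(n^{d-1})=o(n^d)$), which is a faithful elaboration rather than a different argument.
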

\begin{proof}
The inequality is a consequence of Theorem
\ref{Thm:volume comme une limit} and the comparison
$\widehat{\deg}_+(\overline E)\ge\widehat{\deg}(\overline
E)$. Here $\overline E$ is an arbitrary Hermitian vector
bundle on $\Spec\mathcal O_K$. The equality follows from
a classical result which compares Arakelov degree and
Euler-Poincar\'e characteristic (see \cite{Chen08} 4.1.4
for a proof). Attention: in \cite{Chen08}, the author has
adopted the convention $\widehat{\mu}(\overline
E)=\widehat{\deg}(\overline E)/[K:\mathbb Q]\rang E$.
\end{proof}

\begin{rema}
Corollary \ref{Cor:HS moriwaki} is a
generalization of \cite{Moriwaki07} Theorem 6.2 to
continuous metrics case.
\end{rema}

\subsection{A criterion of arithmetic bigness}
We shall prove that the bigness of $\overline{\mathcal
L}$ is equivalent to the positivity of the asymptotic
maximal slope of $\overline{\mathcal L}$. This result is
strongly analogous to Theorem 4.5 of \cite{Moriwaki07}.
In fact, by a result of Borek \cite{Borek05} (see also
\cite{Bost_Kunnemann} Proposition 3.3.1), which
reformulate Minkowski's First Theorem, the maximal slope
of a Hermitian vector bundle on $\Spec\mathcal O_K$ is
``close'' to the opposite of the logarithm of its first
minimum. So the positivity of the asymptotic maximal
slope is equivalent to the existence of (exponentially)
small section when $n$ goes to infinity.
\begin{theo}\label{Thm:acritereion}
$\overline{\mathcal L}$ is big if and only if
$\displaystyle\lim_{n\rightarrow\infty}
{\widehat{\mu}_{\max}( \pi_*(\overline{\mathcal
L}^{\otimes n}))}/{n}>0$. Furthermore, the following
inequality holds:
\[\displaystyle\frac{\widehat{\mathrm{vol}}(\overline{\mathcal
L})}{d\mathrm{vol}(L)}\le
\lim_{n\rightarrow\infty}\frac{\widehat{\mu}_{\max}
(\pi_*(\overline{\mathcal L}^{\otimes n}))}{n}.\]
\end{theo}
\begin{proof} Since both conditions imply the
bigness of $L$, we may assume that $L$ is big. Let
$B=\bigoplus_{n\ge 0}H^0(X,L^{\otimes n})$ equipped with
Harder-Narasimhan filtrations. One has
\[\widehat{\mu}_+(\pi_*(\overline{\mathcal L}^{\otimes n}))=
\lambda_+(B_n),\qquad\widehat{\mu}_{\max}(\pi_*(\overline{\mathcal
L}^{\otimes n})=\lambda_{\max}(B_n).\] Therefore, the
assertion follows from Theorems \ref{Thm:critere de
bigness} and \ref{Thm:volume comme une limit}.
\end{proof}

\begin{rema}\label{Rem:decroissance de 1er minimum}
After \cite{Bost_Kunnemann} Proposition 3.3.1, for any
non-zero Hermitian vector bundle $\overline E$ on
$\Spec\mathcal O_K$, one has
\begin{equation}\label{Equ:Bost Kunnemann}\Big|\widehat{\mu}_{\max}(\overline E)+
\frac12\log \inf_{0\neq s\in
E}\sum_{\sigma:K\rightarrow\mathbb C} \|s\|_{\sigma}^2
\Big| \le\frac{1}{2}\log[K:\mathbb
Q]+\frac{1}{2}\log\rang
E+\frac{\log|\Delta_K|}{2[K:\mathbb Q]}.\end{equation}
Therefore, by \eqref{Equ:Bost Kunnemann}, the bigness of
$\overline{\mathcal L}$ is equivalent to each of the
following conditions:
\begin{enumerate}[1)]
\item $L$ is big, and there
exists $\varepsilon>0$ such that, when $n$ is
sufficiently large, $\overline{\mathcal L}^{\otimes n}$
has a global section $s_n$ satisfying
$\|s_n\|_{\sigma,\sup}\le e^{-\varepsilon n}$ for any
$\sigma:K\rightarrow\mathbb C$.
\item $L$ is big, and there exists an integer $n\ge 1$
such that $\overline{\mathcal L}^{\otimes n}$ has a
global section $s_n$ satisfying $\|s_n\|_{\sigma,\sup}<1$
for any $\sigma:K\rightarrow\mathbb C$.
\end{enumerate}
Thus we recover a result of Moriwaki (\cite{Moriwaki07}
Theorem 4.5 (1)$\Longleftrightarrow$(2)).
\end{rema}

\begin{coro}\label{Cor:generaiclla big adbi}
Assume $L$ is big. Then there exists a Hermitian line
bundle $\overline M$ on $\Spec\mathcal O_K$ such that
$\overline{\mathcal L}\otimes\pi^*\overline M$ is
arithmetically big.
\end{coro}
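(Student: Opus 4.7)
The plan is to set $\overline M=\overline L_a$ for a sufficiently large real constant $a$ and to invoke Theorem~\ref{Thm:acritereion}. The key observation is that twisting by the pull-back of a Hermitian line bundle on $\Spec\mathcal O_K$ shifts the maximal slope of each $\pi_*(\overline{\mathcal L}^{\otimes n})$ by an arithmetic progression in $n$, which allows us to compensate for any finite asymptotic deficit.

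First, I would check that the asymptotic maximal slope
\[\mu_\infty:=\lim_{n\to\infty}\frac{\widehat{\mu}_{\max}(\pi_*(\overline{\mathcal L}^{\otimes n}))}{n}\]
exists as a real number. This follows by applying Proposition~\ref{Pro:convergence des mesures}(1) to the graded $K$-algebra $B=\bigoplus_{n\ge 0}H^0(X,L^{\otimes n})$ equipped with the Harder--Narasimhan filtrations inherited from the $\pi_*(\overline{\mathcal L}^{\otimes n})$: as already noted in the proof of Theorem~\ref{Thm:volume comme une limit}, this algebra is $f$-quasi-filtered for a function $f$ of logarithmic growth, and one has $\lambda_{\max}(B_n)=\widehat{\mu}_{\max}(\pi_*(\overline{\mathcal L}^{\otimes n}))$ for each $n$.

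Next, I would pick any real $a>-\mu_\infty$ and take $\overline M=\overline L_a$. The projection formula together with the elementary identity $\widehat{\mu}_{\max}(\overline E\otimes\overline L_b)=\widehat{\mu}_{\max}(\overline E)+b$ gives, up to an $O(\log n)/n$ error coming from the chosen metrics on direct images,
\[\frac{\widehat{\mu}_{\max}\bigl(\pi_*((\overline{\mathcal L}\otimes\pi^*\overline L_a)^{\otimes n})\bigr)}{n}=\frac{\widehat{\mu}_{\max}(\pi_*(\overline{\mathcal L}^{\otimes n}))}{n}+a,\]
so that passing to the limit yields $\mu_\infty+a>0$. Since the generic fibre of $\overline{\mathcal L}\otimes\pi^*\overline L_a$ is still the big line bundle $L$, both hypotheses of Theorem~\ref{Thm:acritereion} are met and $\overline{\mathcal L}\otimes\pi^*\overline M$ is arithmetically big. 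The only potentially delicate point is the finiteness $\mu_\infty>-\infty$, without which no finite shift would succeed; this finiteness is precisely what Proposition~\ref{Pro:convergence des mesures}(1) delivers via the quasi-filtered structure of $B$, so the proof reduces to this bookkeeping of slope shifts.
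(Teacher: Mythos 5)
Your proof is correct in spirit and is essentially the argument the paper tacitly intends (Corollary~\ref{Cor:generaiclla big adbi} is stated without proof, as a direct consequence of Theorem~\ref{Thm:acritereion}). You correctly isolate the two essential ingredients: (i) the finiteness of $\mu_\infty=\lim_n\widehat{\mu}_{\max}(\pi_*(\overline{\mathcal L}^{\otimes n}))/n$, which comes from Proposition~\ref{Pro:convergence des mesures}(1) applied to the quasi-filtered section algebra, and (ii) the linear shift of slopes under twisting by $\pi^*\overline L_a$, which lets one push $\mu_\infty$ into the positive range; Theorem~\ref{Thm:acritereion} then gives arithmetic bigness, and the generic fibre is unchanged since $(\overline L_a)_K$ is trivial on $X$.

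One small numerical slip, with no effect on the logic: under the paper's convention $\widehat{\mu}(\overline E)=\widehat{\deg}(\overline E)/\rang E$, one has $\widehat{\deg}(\overline L_b)=[K:\mathbb Q]\,b$, so the correct shift identity is $\widehat{\mu}_{\max}(\overline E\otimes\overline L_b)=\widehat{\mu}_{\max}(\overline E)+[K:\mathbb Q]\,b$, not $+b$. Consequently after dividing by $n$ the limit becomes $\mu_\infty+[K:\mathbb Q]\,a$, and you should take $a>-\mu_\infty/[K:\mathbb Q]$. Since $[K:\mathbb Q]>0$ this is still a one-line fix, and the rest of the bookkeeping (the $O(\log n)/n$ error absorbed by the choice of metrics on the direct images, the projection-formula identification $\pi_*(\overline{\mathcal L}^{\otimes n}\otimes\pi^*\overline L_{na})\cong\pi_*(\overline{\mathcal L}^{\otimes n})\otimes\overline L_{na}$) is handled correctly.
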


\section{Continuity of truncated asymptotic polygon}

Let us keep the notation of \S\ref{Sec:volume est une
limite} and assume that $L$ is big on $X$. For any
integer $n\ge 1$, denote by $\nu_n$ the dilated measure
$T_{\frac 1n}\nu_{\pi_*(\overline{\mathcal L}^{\otimes
n})}$. Recall that in \S\ref{Sec:asymptotoci} we have
actually established the followint result:

\begin{prop}
\begin{enumerate}[1)]
\item the sequence of Borel measures $(\nu_n)_{n\ge 1}$
converges vaguely to a Borel probability measure $\nu$;
\item there exists a countable subset $Z$ of $\mathbb R$
such that, for any $\alpha\in\mathbb R\setminus Z$, the
sequence of polygons $(P(\nu_n^{(\alpha)}))_{n\ge 1}$
converges uniformly to $P(\nu^{(\alpha)})$, which impies
in particular that $P(\nu^{(\alpha)})$ is Lipschitz.
\end{enumerate}
\end{prop}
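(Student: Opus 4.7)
My proposal is to derive the proposition by feeding the machinery already developed in Sections~\ref{Sec:positive degree and } and~\ref{Sec:asymptotoci} through the concrete graded algebra $B=\bigoplus_{n\ge 0}H^0(X,L^{\otimes n})$ equipped, in each degree $n$, with the Harder-Narasimhan filtration of $\pi_*(\overline{\mathcal L}^{\otimes n})$. With this filtration one has by construction $\nu_{B_n}=\nu_{\pi_*(\overline{\mathcal L}^{\otimes n})}$, hence $T_{1/n}\nu_{B_n}=\nu_n$; moreover, as recalled in the proof of Theorem~\ref{Thm:volume comme une limit}, this filtered algebra is $f$-quasi-filtered for a function $f$ of logarithmic growth (the verification uses the slope inequality applied to the multiplication maps, exactly as in \cite{Chen08}~\S4.1.3), and $\sup_n\lambda_{\max}(B_n)/n<+\infty$ by the same slope argument.

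For part~1), since $L$ is big by hypothesis, Theorem~\ref{Thm:convergence de polygon general} applies verbatim and yields the vague convergence of $(\nu_n)_{n\ge 1}$ to a Borel probability measure $\nu$ on $\mathbb R$.

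For part~2), I apply the truncation proposition preceding Corollary~\ref{Cor:convergence of positive slope}: there exists a countable set $Z\subset\mathbb R$ (the set of atoms of $\nu$) such that, for every $\alpha\in\mathbb R\setminus Z$, the sequence $(\nu_n^{(\alpha)})_{n\ge 1}$ converges vaguely to $\nu^{(\alpha)}$. Now by construction the support of each $\nu_n^{(\alpha)}$ is contained in $[\alpha,C]$, where $C:=\sup_n\lambda_{\max}(B_n)/n<+\infty$; thus the supports are uniformly bounded. Vague convergence together with uniformly bounded supports upgrades to uniform convergence of the associated polygons by \cite{Chen08} Proposition~1.2.9, which gives $P(\nu_n^{(\alpha)})\to P(\nu^{(\alpha)})$ uniformly on $[0,1]$. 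Each $P(\nu_n^{(\alpha)})$ is concave with slopes contained in $[\alpha,C]$, so the uniform limit $P(\nu^{(\alpha)})$ is concave with slopes in $[\alpha,C]$ and is therefore Lipschitz.

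The only non-routine step is the verification that $B$ is $f$-quasi-filtered with $f(n)=O(\log n)$; this is where the distinction between the algebraic setting of Section~\ref{Sec:asymptotoci} and the arithmetic setting enters, and it is handled by Bost's slope method applied to the multiplication morphism $\pi_*(\overline{\mathcal L}^{\otimes n_1})\otimes\cdots\otimes\pi_*(\overline{\mathcal L}^{\otimes n_r})\to\pi_*(\overline{\mathcal L}^{\otimes(n_1+\cdots+n_r)})$, whose height can be controlled by $O(\log(n_1+\cdots+n_r))$ thanks to the choice of Hermitian structures made at the beginning of Section~\ref{Sec:volume est une limite}. Everything else is an immediate citation of the results already proved in the excerpt.
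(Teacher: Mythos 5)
Your proof is correct and follows exactly the route the paper intends: the paper explicitly presents this proposition as a restatement of what was ``actually established'' in Section~\ref{Sec:asymptotoci}, and your derivation---identify $\nu_n$ with $T_{1/n}\nu_{B_n}$ for the HN-filtered section algebra, invoke Theorem~\ref{Thm:convergence de polygon general} for part~1), then apply the truncation proposition together with \cite{Chen08}~Proposition~1.2.9 (vague convergence plus uniformly bounded supports gives uniform convergence of polygons) for part~2)---is precisely the argument the paper performs inside the proof of Corollary~\ref{Cor:convergence of positive slope}, here spelled out for general $\alpha\notin Z$ rather than only $\alpha<0$.
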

Let $Z$ be as in the proposition above. For any
$\alpha\in\mathbb R\setminus Z$, denote by
$P_{\overline{\mathcal L}}^{(\alpha)}$ the concave
function $P(\nu^{(\alpha)})$ on $[0,1]$. The following
property of $P_{\overline{\mathcal L}}^{(\alpha)}$
results from the definition:
\begin{prop}
For any integer $p\ge 1$, on has $P_{\overline{\mathcal
L}^{\otimes p}}^{(p\alpha)}=pP_{\overline{\mathcal
L}}^{(\alpha)}$.
\end{prop}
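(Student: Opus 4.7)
The plan is to reduce everything to two elementary facts about the operators on measures from \S\ref{SubSec:Reminder on Borel measures}: that $P(T_\varepsilon \mu) = \varepsilon P(\mu)$, and that truncation commutes with dilation in the precise form $(T_p\mu)^{(p\alpha)} = T_p(\mu^{(\alpha)})$. Once these are in place, the identity is immediate.

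First, I would compare the approximating measures for $\overline{\mathcal L}$ and for $\overline{\mathcal L}^{\otimes p}$ at finite level. Writing $\nu_n^{[p]}$ for the dilated measure $T_{1/n}\nu_{\pi_*((\overline{\mathcal L}^{\otimes p})^{\otimes n})}$ attached to $\overline{\mathcal L}^{\otimes p}$, the definitions give
\[
\nu_n^{[p]} = T_{1/n}\nu_{\pi_*(\overline{\mathcal L}^{\otimes pn})} = T_p\,T_{1/(pn)}\nu_{\pi_*(\overline{\mathcal L}^{\otimes pn})} = T_p\nu_{pn}.
\]
Passing to the vague limit along $n\to\infty$, and using that $(\nu_{pn})_{n\ge 1}$ is a subsequence of $(\nu_n)_{n\ge 1}$, gives $\nu^{[p]} = T_p\nu$, where $\nu^{[p]}$ is the asymptotic measure attached to $\overline{\mathcal L}^{\otimes p}$.

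Next, I would check $(T_p\nu)^{(p\alpha)} = T_p(\nu^{(\alpha)})$ by testing against an arbitrary $h\in C_c(\mathbb R)$: applying the definition of $\nu^{(\alpha)}$ to the function $x\mapsto h(px)$ reproduces exactly the right-hand side of the definition of $(T_p\nu)^{(p\alpha)}$. (This also shows that the exceptional set $Z$ for $\overline{\mathcal L}^{\otimes p}$ is $pZ$ for $Z$ the exceptional set of $\overline{\mathcal L}$, so the hypothesis $\alpha\notin Z$ is the same as $p\alpha$ not lying in the exceptional set for $\overline{\mathcal L}^{\otimes p}$.) Combining the two observations and applying $P(T_\varepsilon \mu) = \varepsilon P(\mu)$ with $\varepsilon = p$ gives
\[
P_{\overline{\mathcal L}^{\otimes p}}^{(p\alpha)} = P\bigl((T_p\nu)^{(p\alpha)}\bigr) = P\bigl(T_p(\nu^{(\alpha)})\bigr) = p\,P(\nu^{(\alpha)}) = p\,P_{\overline{\mathcal L}}^{(\alpha)}.
\]

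There is no real obstacle here; the only point that needs a moment's care is the compatibility $(T_p\mu)^{(p\alpha)} = T_p(\mu^{(\alpha)})$, which is a direct computation from the definition of the truncation operator. Everything else is formal bookkeeping together with results already recalled in \S\ref{SubSec:Reminder on Borel measures}.
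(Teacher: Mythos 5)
Your proof is correct and follows essentially the same route as the paper's: identify the finite-level measures via $T_p\nu_{pn}$, pass to the vague limit to obtain $\nu^{[p]}=T_p\nu$, and then combine the truncation--dilation compatibility $(T_p\nu)^{(p\alpha)}=T_p(\nu^{(\alpha)})$ with $P(T_\varepsilon\mu)=\varepsilon P(\mu)$. Your index bookkeeping is in fact a bit more careful than the paper's terse proof, which writes $T_p\nu_n$ where it evidently means $T_p\nu_{pn}$.
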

\begin{proof}
By definition $T_{\frac 1n}\nu_{\pi_*(\overline{\mathcal
L}^{\otimes pn})}=T_{p}\nu_n$. Using
$(T_p\nu_n)^{(p\alpha)}=T_{p}\nu_n^{(\alpha)}$, we obtain
the desired equality.
\end{proof}

\begin{rema}
We deduce from the previous proposition the equality
$\widehat{\mathrm{vol}}(\overline{\mathcal L}^{\otimes
p})=p^d\widehat{\mathrm{vol}}(\overline{\mathcal L})$,
which has been proved by Moriwaki
(\cite{Moriwaki07} Proposition 4.7).
\end{rema}

The main purpose of this section is to establish the
following continuity result, which is a generalization of
the continuity of the arithmetic volume function proved
by Moriwaki (cf. \cite{Moriwaki07} Theorem 5.4).
\begin{theo}\label{Thm:continuity}
Assume $\overline{\mathscr L}$ is a Hermitian line bundle
on $\mathcal X$. Then, for all but countably many
$\alpha\in\mathbb R$, the sequence of functions
$\Big(\frac{1}{p}P^{(p\alpha)}_{\overline{\mathcal
L}^{\otimes p}\otimes\overline{\mathscr L}}\Big)_{p\ge
1}$ converges uniformly to $P_{\overline{\mathcal
L}}^{(\alpha)}$.
\end{theo}

\begin{coro}[\cite{Moriwaki07} Theorem 5.4]
With the assumption of Theorem \ref{Thm:continuity}, one
has \[\displaystyle\lim_{p\rightarrow\infty}\frac{1}{p^d}
\widehat{\mathrm{vol}}(\overline{\mathcal L}^{\otimes
p}\otimes\overline{\mathscr
L})=\widehat{\mathrm{vol}}(\overline{\mathcal L}).\]
\end{coro}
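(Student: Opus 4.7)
The plan is to reduce the $\overline{\mathscr L}$-twisted problem to the untwisted one via a two-sided sandwich argument, using the measure form of the slope inequality (Proposition \ref{Pro:inegalite des pente}) to compare $\pi_*(\overline{\mathcal L}^{\otimes p}\otimes\overline{\mathscr L})$ with $\pi_*(\overline{\mathcal L}^{\otimes p})$. Once vague convergence of the rescaled measures is established, the uniform convergence of the truncated polygons will follow exactly as in the proposition preceding the statement.

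I would first choose an arithmetically ample Hermitian line bundle $\overline{\mathscr N}$ on $\mathcal X$ such that $\overline{\mathscr L}\otimes\overline{\mathscr N}$, $\overline{\mathscr L}^\vee\otimes\overline{\mathscr N}$, and $\overline{\mathscr N}$ itself are all arithmetically ample (any fixed arithmetically ample line bundle on $\mathcal X$ raised to a sufficiently high tensor power does the job), so that each of these three bundles has a non-zero strictly effective global section. Multiplication by these sections yields two parallel chains of injections of Hermitian vector bundles on $\Spec\mathcal O_K$:
\[
\pi_*(\overline{\mathcal L}^{\otimes p}\otimes\overline{\mathscr N}^\vee)\hookrightarrow\pi_*(\overline{\mathcal L}^{\otimes p}\otimes\overline{\mathscr L})\hookrightarrow\pi_*(\overline{\mathcal L}^{\otimes p}\otimes\overline{\mathscr N}),
\]
\[
\pi_*(\overline{\mathcal L}^{\otimes p}\otimes\overline{\mathscr N}^\vee)\hookrightarrow\pi_*(\overline{\mathcal L}^{\otimes p})\hookrightarrow\pi_*(\overline{\mathcal L}^{\otimes p}\otimes\overline{\mathscr N}).
\]
Because the chosen pushforward metrics differ from the sup-norm metrics by at most $O(\log p)$ (see the discussion preceding \eqref{Equ:comparason de deg et h}), the height of each of these four injections is $O(\log p)$.

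I would then apply Proposition \ref{Pro:inegalite des pente} to each injection and rescale by $T_{1/p}$. Setting $\mu_p=T_{1/p}\nu_{\pi_*(\overline{\mathcal L}^{\otimes p}\otimes\overline{\mathscr L})}$, $\nu_p=T_{1/p}\nu_{\pi_*(\overline{\mathcal L}^{\otimes p})}$, and $\lambda_p^{\pm}=T_{1/p}\nu_{\pi_*(\overline{\mathcal L}^{\otimes p}\otimes\overline{\mathscr N}^{\pm 1})}$, the ranks of all four Hermitian vector bundles are polynomials of degree $d-1$ in $p$ sharing the same leading coefficient $\mathrm{vol}(L)/(d-1)!$, so the ratios $\theta_p$ appearing in Proposition \ref{Pro:inegalite des pente} tend to $1$ (making the error atoms $(1-\theta_p)\delta_{\widehat\mu_{\min}/p}$ contribute vanishing mass), and the rescaled translations $h_p/p$ tend to $0$. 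Composing the two injections of each chain gives $\pi_*(\overline{\mathcal L}^{\otimes p}\otimes\overline{\mathscr N}^\vee)\hookrightarrow\pi_*(\overline{\mathcal L}^{\otimes p}\otimes\overline{\mathscr N})$ whose height is still $O(\log p)$; this controls the width of the outer sandwich. Combining the two sandwiches (one for $\mu_p$, one for $\nu_p$) with the vague convergence $\nu_p\to\nu$ coming from Theorem \ref{Thm:convergence de polygon general}, and invoking the translation lemma (\cite{Chen08} Lemma 1.2.10) to absorb the vanishing shifts $h_p/p$, should then force $\mu_p\to\nu$ vaguely.

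Finally, the truncation result from Section \ref{Sec:asymptotoci} produces a countable set $Z\subset\mathbb R$ (the atoms of $\nu$) such that for every $\alpha\in\mathbb R\setminus Z$ the measures $\mu_p^{(\alpha)}$ converge vaguely to $\nu^{(\alpha)}$. Their supports are uniformly bounded above (since $\widehat\mu_{\max}(\pi_*(\overline{\mathcal L}^{\otimes p}\otimes\overline{\mathscr L}))/p$ is bounded by Proposition \ref{Pro:convergence des mesures} 1) applied to the graded algebra $\bigoplus_p H^0(X,L^{\otimes p}\otimes\mathscr L)$) and bounded below by $\alpha$, so \cite{Chen08} Proposition 1.2.9 upgrades this vague convergence to uniform convergence of polygons, yielding $\frac{1}{p}P^{(p\alpha)}_{\overline{\mathcal L}^{\otimes p}\otimes\overline{\mathscr L}}=P(\mu_p^{(\alpha)})\to P(\nu^{(\alpha)})=P_{\overline{\mathcal L}}^{(\alpha)}$ uniformly on $[0,1]$. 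The main obstacle I anticipate is the sandwich step: Proposition \ref{Pro:inegalite des pente} gives only one-sided $\succ$ inequalities with error atoms at $\widehat\mu_{\min}$, so to upgrade the two-sided inclusion "$\mu_p$ and $\nu_p$ both lie between $\lambda_p^-$ and $\lambda_p^+$" into an equality of limits, one must track the translation errors $h_p/p$ and rank ratios $\theta_p$ simultaneously and exploit the composed-chain height bound to show that the outer sandwich itself collapses in the vague limit.
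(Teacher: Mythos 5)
Your proposal never actually proves the corollary: it stops at (a sketch of) the uniform convergence of truncated polygons, which is the content of Theorem \ref{Thm:continuity} itself, and never translates that convergence into a statement about the arithmetic volume. The corollary is deduced in the paper \emph{from} Theorem \ref{Thm:continuity}, and that deduction is the whole point here. Concretely, Theorem \ref{Thm:volume comme une limit} together with Corollary \ref{Cor:convergence of positive slope} and \eqref{Equ:max p invariant} give, for any fixed $\alpha<0$ outside the exceptional set,
\[
\widehat{\mathrm{vol}}(\overline{\mathcal L}^{\otimes p}\otimes\overline{\mathscr L})
=d\cdot\mathrm{vol}(L^{\otimes p}\otimes\mathscr L_K)\cdot\max_{t\in[0,1]}P^{(p\alpha)}_{\overline{\mathcal L}^{\otimes p}\otimes\overline{\mathscr L}}(t),
\]
so that
\[
\frac{1}{p^d}\widehat{\mathrm{vol}}(\overline{\mathcal L}^{\otimes p}\otimes\overline{\mathscr L})
=d\cdot\frac{\mathrm{vol}(L^{\otimes p}\otimes\mathscr L_K)}{p^{d-1}}\cdot\max_{t\in[0,1]}\frac{1}{p}P^{(p\alpha)}_{\overline{\mathcal L}^{\otimes p}\otimes\overline{\mathscr L}}(t).
\]
The first factor tends to $d\cdot\mathrm{vol}(L)$ by continuity of the geometric volume, the second to $\max_t P^{(\alpha)}_{\overline{\mathcal L}}(t)$ by the uniform convergence of Theorem \ref{Thm:continuity}, and the product is $\widehat{\mathrm{vol}}(\overline{\mathcal L})$. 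That is the proof; none of it appears in your write-up.

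There is also a substantive error in the attempted re-derivation. You set $\mu_p=T_{1/p}\nu_{\pi_*(\overline{\mathcal L}^{\otimes p}\otimes\overline{\mathscr L})}$ and claim $\frac{1}{p}P^{(p\alpha)}_{\overline{\mathcal L}^{\otimes p}\otimes\overline{\mathscr L}}=P(\mu_p^{(\alpha)})$. This is false: $P^{(p\alpha)}_{\overline{\mathcal L}^{\otimes p}\otimes\overline{\mathscr L}}$ is by definition $P(\eta_p^{(p\alpha)})$ where $\eta_p$ is the \emph{asymptotic} measure of $\overline{\mathcal L}^{\otimes p}\otimes\overline{\mathscr L}$, i.e.\ the vague limit as $n\to\infty$ of $T_{1/n}\nu_{\pi_*(\overline{\mathcal L}^{\otimes pn}\otimes\overline{\mathscr L}^{\otimes n})}$. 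Your $\mu_p$ is merely the $n=1$ term of that sequence. Showing $\mu_p\to\nu$ vaguely (which the sandwich argument does plausibly give) says nothing about the nested limit $\eta_p$, and hence nothing about $\widehat{\mathrm{vol}}(\overline{\mathcal L}^{\otimes p}\otimes\overline{\mathscr L})$, which by Theorem \ref{Thm:volume comme une limit} is $d\cdot\mathrm{vol}(L^{\otimes p}\otimes\mathscr L_K)$ times a limit over $n$ for fixed $p$. The two-index structure in the paper's proof of Theorem \ref{Thm:continuity} (the homomorphisms $\varphi_{p,n}$, $\psi_{p,n}$, and the limits $\eta_p$) is unavoidable, and collapsing it to the diagonal $p=n$ loses the information you need.
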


In order to prove Theorem \ref{Thm:continuity}, we need
the following lemma.
\begin{lemm}\label{Lem:contiuite1}
Let $\overline{\mathscr L}$ be an arbitrary Hermitian
line bundle on $\Spec\mathcal O_K$. If
$\overline{\mathcal L}$ is arithmetically big, then there
exists an integer $q\ge 1$ such that $\overline{\mathcal
L}^{\otimes q}\otimes\overline{\mathscr L}$ is
arithmetically big and has at least one non-zero {\rm
effective} global section, that is, a non-zero section
$s\in H^0(\mathcal X,\mathcal L^{\otimes
q}\otimes\mathscr L)$ such that $\|s\|_{\sigma,\sup}\le
1$ for any embedding $\sigma:K\rightarrow\mathbb C$.
\end{lemm}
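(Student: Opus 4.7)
The plan is to produce the desired effective section by combining the exponentially small sections guaranteed by Remark \ref{Rem:decroissance de 1er minimum} with a fixed nonzero element of $\mathscr L$, and to ensure arithmetic bigness of the twist $\overline{\mathcal L}^{\otimes q}\otimes\pi^*\overline{\mathscr L}$ by tracking how the asymptotic maximal slope shifts under tensoring with $\pi^*\overline{\mathscr L}$.

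First I would invoke Theorem \ref{Thm:acritereion}: since $\overline{\mathcal L}$ is arithmetically big, the limit
\[\mu_\infty:=\lim_{n\to\infty}\widehat{\mu}_{\max}(\pi_*(\overline{\mathcal L}^{\otimes n}))/n\]
is strictly positive, and $L=\mathcal L_K$ is big on $X$. By Remark \ref{Rem:decroissance de 1er minimum}, there exists $\varepsilon>0$ such that, for all sufficiently large $n$, some non-zero $s_n\in H^0(\mathcal X,\mathcal L^{\otimes n})$ satisfies $\|s_n\|_{\sigma,\sup}\le e^{-\varepsilon n}$ for every embedding $\sigma:K\rightarrow\mathbb C$. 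Pick any non-zero $t\in\mathscr L$ (which exists because $\mathscr L$ is a non-zero projective $\mathcal O_K$-module) and set $C:=\max_\sigma\|t\|_\sigma$. By the projection formula, $s_q\otimes t$ is a non-zero element of $H^0(\mathcal X,\mathcal L^{\otimes q}\otimes\pi^*\mathscr L)$, and since the metric on $\pi^*\overline{\mathscr L}$ is the pullback of that of $\overline{\mathscr L}$, one has $\|s_q\otimes t\|_{\sigma,\sup}\le Ce^{-\varepsilon q}$, which is $\le 1$ as soon as $q\varepsilon\ge\log C$.

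It remains to verify arithmetic bigness of the twist for a suitable $q$. The projection formula gives
\[\pi_*\big((\overline{\mathcal L}^{\otimes q}\otimes\pi^*\overline{\mathscr L})^{\otimes n}\big)\cong\pi_*(\overline{\mathcal L}^{\otimes qn})\otimes\overline{\mathscr L}^{\otimes n},\]
and tensoring with a Hermitian line bundle on $\Spec\mathcal O_K$ shifts every slope by its Arakelov degree, so
\[\lim_{n\to\infty}\frac{\widehat{\mu}_{\max}\big(\pi_*((\overline{\mathcal L}^{\otimes q}\otimes\pi^*\overline{\mathscr L})^{\otimes n})\big)}{n}=q\mu_\infty+\widehat{\deg}(\overline{\mathscr L}).\]
I would then choose $q$ large enough that simultaneously $q\mu_\infty+\widehat{\deg}(\overline{\mathscr L})>0$ and $q\varepsilon\ge\log C$; since $\mu_\infty>0$ and $\varepsilon>0$, both conditions are achievable. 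As $L^{\otimes q}$ is still big on $X$, Theorem \ref{Thm:acritereion} applies and yields arithmetic bigness of the twist, while the section $s_q\otimes t$ constructed above is effective.

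The main obstacle is keeping track of the slope shift under twisting by a line bundle on the base, and verifying that the two numerical conditions on $q$ can be met simultaneously; once this bookkeeping is in place, every step reduces to a direct application of results already proved earlier in the paper.
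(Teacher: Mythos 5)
Your proof is correct for the statement as literally written, where $\overline{\mathscr L}$ is a Hermitian line bundle on $\Spec\mathcal O_K$. However, that hypothesis is a misprint in the paper: in the proof of Theorem~\ref{Thm:continuity}, the lemma is applied to a Hermitian line bundle $\overline{\mathscr L}$ on $\mathcal X$, and the paper's own proof of the lemma works with $\mathscr L_K$ as a line bundle on $X$ and with $\pi_*(\mathcal L^{\otimes m_0}\otimes\mathscr L)$ --- expressions that only make sense (non-trivially) when $\mathscr L$ lives on $\mathcal X$. Your argument hinges on two features special to the base case: the existence of a non-zero global element $t\in\mathscr L$ you can pull back and multiply by a small section of $\mathcal L^{\otimes q}$, and the projection-formula identity $\pi_*((\overline{\mathcal L}^{\otimes q}\otimes\pi^*\overline{\mathscr L})^{\otimes n})\cong\pi_*(\overline{\mathcal L}^{\otimes qn})\otimes\overline{\mathscr L}^{\otimes n}$, which converts the twist into a uniform slope shift by $n\widehat{\deg}(\overline{\mathscr L})$. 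Neither of these survives when $\mathscr L$ is a genuine line bundle on $\mathcal X$: $\mathscr L$ itself need have no global sections at all, and there is no exact formula for $\widehat{\mu}_{\max}(\pi_*(\mathcal L^{\otimes qn}\otimes\mathscr L^{\otimes n}))$ in terms of that of $\pi_*(\mathcal L^{\otimes qn})$.

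The paper sidesteps both issues with a different decomposition: since $L$ is big, one can find $m_0$ so that $L^{\otimes m_0}\otimes\mathscr L_K$ is big on $X$, hence $\pi_*(\mathcal L^{\otimes m_0}\otimes\mathscr L)\neq 0$, and one picks an arbitrary (possibly very non-effective) section $s$ of it with sup-norm $M$. Then, using Remark~\ref{Rem:decroissance de 1er minimum} as you do, one produces a section $s'$ of $\mathcal L^{\otimes m_1}$ with all sup-norms $\le (2M)^{-1}$, so that $s\otimes s'$ is a strictly effective section of $\overline{\mathcal L}^{\otimes(m_0+m_1)}\otimes\overline{\mathscr L}$. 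Bigness of the twist then follows from criterion 2) of Remark~\ref{Rem:decroissance de 1er minimum} (generically big plus one strictly effective section), rather than from a slope-shift computation. So your route is legitimate and arguably cleaner in the base case, but you should rework it along the paper's lines --- compensating $s$'s large norm by $s'$ rather than relying on a pullback element and an Arakelov-degree shift --- to cover the situation the lemma is actually needed for.
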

\begin{proof}
As $\overline{\mathcal L}$ is arithmetically big, we
obtain that $L$ is big on $X$. Therefore, there exists an
integer $m_0\ge 1$ such that $L^{\otimes
m_0}\otimes\mathscr L_K$ is big on $X$ and
$\pi_*(\mathcal L^{\otimes m_0}\otimes\mathscr L)\neq 0$.
Pick an arbitrary non-zero section $s\in H^0(\mathcal
X,\mathcal L^{\otimes m_0}\otimes\mathscr L)$ and let
$M=\displaystyle\sup_{\sigma:K\rightarrow\mathbb
C}\|s\|_{\sigma,\sup}$. After Theorem
\ref{Thm:acritereion} (see also Remark
\ref{Rem:decroissance de 1er minimum}), there exists
$m_1\in\mathbb N$ such that $\mathcal L^{\otimes m_1}$
has a section $s'$ such that $\|s'\|_{\sigma,\sup}\le
(2M)^{-1}$ for any $\sigma:K\rightarrow\mathbb C$. Let
$q=m_0+m_1$. Then $s\otimes s'$ is a non-zero strictly
effective section of $\overline{\mathcal L}^{\otimes
q}\otimes\overline{\mathscr L}$. Furthermore,
$\overline{\mathcal L}^{\otimes
q}\otimes\overline{\mathscr L}$ is arithmetically big
since it is generically big and has a strictly effective
section.
\end{proof}

\begin{proof}[Proof of Theorem \ref{Thm:continuity}]
After Corollary \ref{Cor:generaiclla big adbi}, we may
assume that $\overline{\mathcal L}$ is arithmetically
big. Let $q\ge 1$ be an integer such that
$\overline{\mathcal L}^{\otimes
q}\otimes\overline{\mathscr L}$ is arithmetically big and
has a non-zero effective section $s_1$ (cf. Lemma
\ref{Lem:contiuite1}). For any integers $p$ and $n$ such
that $p>q$, $n\ge 1$, let $\varphi_{p,n}:\pi_*(\mathcal
L^{\otimes(p-q)n})\rightarrow\pi_*(\mathcal L^{\otimes
pn}\otimes\mathscr L^{\otimes n})$ be the homomorphism
defined by the multiplication by $s_1^{\otimes n}$. Since
$s_1$ is effective, $h(\varphi_{p,n})\le 0$. Let
\[\theta_{p,n}=\rang(\pi_*(\mathcal
L^{\otimes(p-q)n}))/\rang(\pi_*(\mathcal L^{\otimes
pn}\otimes\mathscr L^{\otimes n})).\] Note that
\[\displaystyle\lim_{n\rightarrow\infty}\theta_{p,n}=\mathrm{vol}(
L^{\otimes(p-q)})/\mathrm{vol}(L^{\otimes
p}\otimes\mathscr L_K).\] Denote by $\theta_p$ this
limit. Let $\nu_{p,n}$ be the measure associated to
$\pi_*(\overline{\mathcal L}^{\otimes
pn}\otimes\overline{\mathscr L}^{\otimes n})$. Let
$a_{p,n}=\widehat{\mu}_{\min}(\pi_*(\overline{\mathcal
L}^{\otimes pn}\otimes\overline{\mathscr L}^{\otimes
n}))$. After Proposition \ref{Pro:inegalite des pente},
one has
${\nu}_{p,n}\succ\theta_{p,n}T_{(p-q)n}\nu_{(p-q)n}+(1-\theta_{p,n})
\delta_{a_{p,n}}$, or equivalently
\begin{equation}\label{Equ:encadrement 1}
T_{\frac{1}{np}}\nu_{p,n}\succ\theta_{p,n}T_{(p-q)/p}\nu_{(p-q)n}
+(1-\theta_{p,n})\delta_{a_{p,n}/np}.
\end{equation} As $L^{\otimes p}\otimes\mathscr L_K$ is
big, the sequence of measures $(T_{\frac
1n}\nu_{p,n})_{n\ge 1}$ converges vaguely to a Borel
probability measure $\eta_p$. By truncation and then by
passing $n\rightarrow\infty$, we obtain from
\eqref{Equ:encadrement 1} that for all but countably many
$\alpha\in\mathbb R$, \begin{equation}\label{Equ:mior de
mesure}(T_{\frac
1p}\eta_p)^{(\alpha)}\succ\theta_p(T_{(p-q)/p}\nu)^{(\alpha)}
+(1-\theta_p)\delta_\alpha,\end{equation} where we have
used the trivial estimation
$\delta_a^{(\alpha)}\succ\delta_{\alpha}$.

Now we apply Lemma \ref{Lem:contiuite1} on the dual
Hermitian line bundle $\overline{\mathscr L}^{\vee}$ and
obtain that there exists an integer $r\ge 1$ and an
effective section $s_2$ of $\overline{\mathcal
L}^{\otimes r}\otimes\overline{\mathscr L}^{\vee}$.
Consider now the homomorphism $\psi_{p,n}:\pi_*(\mathcal
L^{\otimes pn}\otimes\mathscr L^{\otimes
n})\rightarrow\pi_*(\mathcal L^{\otimes(p+r)n})$ induced
by multiplication by $s_2^{\otimes n}$. Its height is
negative. Let
\[\vartheta_{p,n}=\rang(\pi_*(\mathcal
L^{\otimes pn}\otimes\mathscr L^{\otimes
n}))/\rang(\pi_*(\mathcal L^{\otimes(p+r)n})).\] When $n$
tends to infinity, $\vartheta_{p,n}$ converges to
\[\vartheta_p:=\mathrm{vol}(L^{\otimes p}\otimes\mathscr L_K)/
\mathrm{vol}(L^{\otimes(p+r)}).\] By a similar argument
as above, we obtain that for all but countably many
$\alpha\in\mathbb R$,
\begin{equation}\label{Equ:encadrement partie 2}
(T_{(p+r)/p}\nu)^{(\alpha)}\succ\vartheta_{p}(T_{\frac
1p}\eta_p)^{(\alpha)}+(1-\vartheta_p)\delta_\alpha.
\end{equation}
We obtain from \eqref{Equ:mior de mesure} and
\eqref{Equ:encadrement partie 2} the following
estimations of polygons
\begin{gather}\vartheta_p^{-1}P((T_{(p+r)/p}
\nu)^{(\alpha)})(\vartheta_pt)\ge P((T_{\frac{1}{p}}\eta_p)^{(\alpha)})(t)\\
P((T_{\frac{1}{p}}\eta_p)^{(\alpha)})(t)\ge\begin{cases}
\theta_p P((T_{(p-q)/p}\nu)^{(\alpha)})(t/\theta_p),&
0\le
t\le\theta_p,\\
\theta_p
P((T_{(p-q)/p}\nu)^{(\alpha)})(1)+\alpha(t-\theta_p),&\theta_p\le
t\le 1.
\end{cases}.
\end{gather}
Finally, since
$\displaystyle\lim_{p\rightarrow\infty}\theta_p=\lim_{p\rightarrow\infty}
\vartheta_p=1$ (which is a consequence of the continuity
of the geometric volume function), combined with the fact
that both $T_{(p-q)/p}\nu$ and $T_{(p+r)/p}\nu$ converge
vaguely to $\nu$ when $p\rightarrow\infty$, we obtain,
for all but countably many $\alpha\in\mathbb R$, the
uniform convergence of $P((T_{\frac
1p}\eta_p)^{(\alpha)})$ to $P(\nu^{(\alpha)})$.
\end{proof}

\section{Compuation of asymptotic polygon by volume function}

In this section we shall show how to compute the asymptotic polygon of a Hermitian line bundle by using arithmetic volume function. Our main method is the Legendre transformation of concave functions. Let us begin with a lemma concerning Borel measures.

\begin{lemm}
Let $\nu$ be a Borel measure on $\mathbb R$ whose support is bounded from below. Then
\begin{equation}
\label{Equ:volume comme une integrale}
\max_{t\in[0,1[} P(\nu)(t)=\int_{\mathbb R}x_+\nu(\mathrm{d}x),
\end{equation}
where $x_+$ stands for $\max\{x,0\}$.
\end{lemm}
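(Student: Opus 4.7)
The plan is to use the integral representation $P(\nu)(t)=\int_0^{t}F_\nu^*(s)\,ds$ recalled in \S\ref{SubSec:Reminder on Borel measures}, which identifies $F_\nu^*$ with the (non-increasing) derivative of the concave function $P(\nu)$. Since $F_\nu^*$ is non-increasing, $P(\nu)$ reaches its maximum on $[0,1[$ at $t^*:=\sup\{s\ge0\mid F_\nu^*(s)\ge 0\}$, so that
\[
\max_{t\in[0,1[}P(\nu)(t)=\int_0^{t^*}F_\nu^*(s)\,ds=\int_0^{+\infty}\max\{F_\nu^*(s),0\}\,ds.
\]

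The next step is to rewrite the right-hand integral via Fubini. Setting $g(x):=\nu(]x,+\infty[)$ for $x\ge 0$, the very definition $F_\nu^*(s)=\sup\{x\mid g(x)>s\}$ together with the right-continuity of $g$ yields the equivalence $F_\nu^*(s)>x\iff g(x)>s$ for $s,x\ge 0$, i.e.\ $F_\nu^*\big|_{[0,+\infty[}$ is the generalised right-continuous inverse of $g\big|_{[0,+\infty[}$. A single application of Fubini then gives
\[
\int_0^{+\infty}\max\{F_\nu^*(s),0\}\,ds=\iint_{\{s,x\ge0,\,x<F_\nu^*(s)\}}ds\,dx=\int_0^{+\infty}g(x)\,dx,
\]
and the standard layer-cake identity $\int_{\mathbb R}x_+\,\nu(dx)=\int_0^{+\infty}\nu(]x,+\infty[)\,dx$ closes the chain and produces \eqref{Equ:volume comme une integrale}. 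The hypothesis that $\operatorname{supp}\nu$ is bounded from below is used to ensure that all the integrals above are well-defined and that $g(x)$ decays suitably at $+\infty$.

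The only delicate point is the strict/non-strict bookkeeping in the equivalence $F_\nu^*(s)>x\Leftrightarrow g(x)>s$ and the determination of $t^*$: one has to handle possible atoms of $\nu$ at $0$ and to observe that the distinction between $F_\nu^*\ge 0$ and $F_\nu^*>0$ affects only a set of Lebesgue measure zero in the first display, so it is harmless for the integrals. Once these details are set up, the two Fubini computations are entirely routine and the lemma follows.
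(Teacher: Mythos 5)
Your proof is correct, but it follows a different route than the paper's. You identify the maximiser $t^*$ of $P(\nu)$ directly through the non\nobreakdash-increasing derivative $F_\nu^*$, and then convert $\int_0^{t^*}F_\nu^*(s)\,ds$ into $\int_0^{+\infty}\nu(]x,+\infty[)\,dx$ by a hands-on Fubini argument using the generalised-inverse equivalence $F_\nu^*(s)>x\Leftrightarrow\nu(]x,+\infty[)>s$, closing with the layer-cake formula. The paper instead exploits the truncation machinery it has already set up: by \eqref{Equ:max p invariant}, truncating at $0$ does not change the maximum, so $\max_{t\in[0,1[}P(\nu)(t)=P(\nu^{(0)})(1)$, and since $\nu^{(0)}$ has compact support with first moment equal to $\int x_+\,\nu(\mathrm{d}x)$ it suffices to invoke the identity $P(\eta)(1)=\int x\,\eta(\mathrm{d}x)$ for compactly supported $\eta$ (that identity being precisely the Fubini computation you spelled out, which the paper treats as immediate from the definition of $F_\eta^*$). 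So your argument is essentially the unfolded version of the paper's more structural one; what you gain is self-containedness, what you lose is the economy of quoting \eqref{Equ:max p invariant}. One small imprecision: you claim that the set where $F_\nu^*=0$, relevant to the distinction between $\ge 0$ and $>0$ in the definition of $t^*$, has Lebesgue measure zero. That is false if $\nu$ has an atom at $0$ -- then $F_\nu^*$ is identically $0$ on an interval of positive length -- but the conclusion survives because the integrand $\max\{F_\nu^*,0\}$ vanishes on that set anyway, so the integrals are still unaffected.
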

\begin{proof}
Since the function $F_{\nu}^*$ defined in
\S\ref{SubSec:Reminder on Borel measures} is essentially
the inverse of the distribution function of $\nu$, by
definition we obtain that, if $\eta$ is a Borel measure
of compact support, then
\[P(\eta)(1):=\lim_{t\rightarrow 1-}P(\eta)(t)=\int_{\mathbb R} x\eta(\mathrm{d}x).\]
Applying this equality on $\eta=\nu^{(0)}$, we obtain
\[\max_{t\in[0,1[} P(\nu)(t)=P(\nu^{(0)})(1)=
\int_{\mathbb R}x\nu^{(0)}(\mathrm{d}x)=\int_{\mathbb R}
x_+\nu^{(0)}(\mathrm{d}x)=\int_{\mathbb R}
x_+\nu(\mathrm{d}x).\]
\end{proof}

Now let $\mathcal X$ be an arithmetic variety of total dimension $d$. For any Hermitian line bundle $\overline{\mathcal L}$ on $\mathcal X$ whose generic fibre is big, we denote by $\nu_{\overline{\mathcal L}}$ the vague limite of the sequence of measures $(T_{\frac 1n}\nu_{\pi_*(\overline{\mathcal L}^{\otimes n})})_{n\ge  1}$. The existence of $\nu_{\overline{\mathcal L}}$ has been established in Theorem \ref{Thm:convergence de polygon general}.

\begin{prop}\label{Pro:calcul de polygone limite} Let $L=\mathcal L_K$.
For any real number $a$, one has
\[\int_{\mathbb R}(x-a)_+\nu_{\overline{\mathcal L}}(\mathrm{d}x)=
\frac{\widehat{\mathrm{vol}}(\overline{\mathcal L}
\otimes\pi^*\overline L_{-a})}{d\mathrm{vol}(L)},\]
where $\overline L_{-a}$ is the Hermitian line bundle on $\Spec\mathcal O_K$ defined in \eqref{Equ:la}.
\end{prop}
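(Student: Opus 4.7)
My strategy is to identify both sides with the asymptotic positive slope of the twisted Hermitian line bundle $\overline{\mathcal L}\otimes\pi^*\overline L_{-a}$, and then to apply the preceding lemma.

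First I would trace the effect of the twist on the Harder--Narasimhan measures. By the projection formula, $\pi_*\bigl((\overline{\mathcal L}\otimes\pi^*\overline L_{-a})^{\otimes n}\bigr)$ identifies with $\pi_*(\overline{\mathcal L}^{\otimes n})\otimes\overline L_{-na}$ as Hermitian vector bundles on $\Spec\mathcal O_K$: tensoring by the pullback only multiplies the sup-norms by $e^{-na}$, so the John--L\"owner comparison chosen in \S\ref{Sec:volume est une limite} is preserved. Since tensoring by $\overline L_{-na}$ shifts every HN slope by $-na$, one has $\nu_{\pi_*((\overline{\mathcal L}\otimes\pi^*\overline L_{-a})^{\otimes n})}=\tau_{-na}\nu_{\pi_*(\overline{\mathcal L}^{\otimes n})}$, and since $T_{1/n}\tau_{-na}=\tau_{-a}T_{1/n}$, passing to the vague limit (Theorem \ref{Thm:convergence de polygon general} applied to either side) yields
\[
\nu_{\overline{\mathcal L}\otimes\pi^*\overline L_{-a}}=\tau_{-a}\nu_{\overline{\mathcal L}}.
\]

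Next I would apply Theorem \ref{Thm:volume comme une limit} to $\overline{\mathcal L}\otimes\pi^*\overline L_{-a}$ (whose generic fibre is still $L$), obtaining
\[
\frac{\widehat{\mathrm{vol}}(\overline{\mathcal L}\otimes\pi^*\overline L_{-a})}{d\,\mathrm{vol}(L)}=\lim_{n\to\infty}\frac{\widehat{\mu}_+\bigl(\pi_*((\overline{\mathcal L}\otimes\pi^*\overline L_{-a})^{\otimes n})\bigr)}{n}=\lim_{n\to\infty}\max_{t\in[0,1]}P\bigl(T_{\tfrac 1n}\nu_{\pi_*((\overline{\mathcal L}\otimes\pi^*\overline L_{-a})^{\otimes n})}\bigr)(t),
\]
where I have used the scaling identity $P(T_{1/n}\nu)=(1/n)P(\nu)$. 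The limit on the right is then identified with $\max_{t\in[0,1]}P(\nu_{\overline{\mathcal L}\otimes\pi^*\overline L_{-a}})(t)$ by the truncation argument of Corollary \ref{Cor:convergence of positive slope}: pick $\alpha<0$ outside the countable exceptional set, so that the truncated measures $(T_{1/n}\nu_{\pi_*(\cdots)})^{(\alpha)}$ converge vaguely with uniformly bounded supports and $P(\cdot)$ converges uniformly; the identity \eqref{Equ:max p invariant} (valid since $\alpha<0$) then propagates the $\max$ to the limit.

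Finally, since $\nu_{\overline{\mathcal L}\otimes\pi^*\overline L_{-a}}=\tau_{-a}\nu_{\overline{\mathcal L}}$ has support bounded from above, the preceding lemma (in the form appropriate for such measures, with the truncation $\nu^{(0)}$ replacing mass below $0$) gives
\[
\max_{t\in[0,1]}P(\tau_{-a}\nu_{\overline{\mathcal L}})(t)=\int_{\mathbb R}x_+\,\tau_{-a}\nu_{\overline{\mathcal L}}(\mathrm{d}x)=\int_{\mathbb R}(y-a)_+\,\nu_{\overline{\mathcal L}}(\mathrm{d}y),
\]
where the last equality is just the definition $\int h\,\mathrm{d}\tau_{-a}\nu=\int h(y-a)\,\nu(\mathrm{d}y)$. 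Chaining the three displays yields the claim. The main point requiring care is the second step, where one must pass from the $\max$ of limits of polygons to the $\max$ of the limit polygon; this is precisely handled by truncating at a negative $\alpha$ avoiding the countable exceptional set, and is essentially the same device used throughout \S\ref{Sec:asymptotoci}.
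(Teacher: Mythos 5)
Your proof is correct and follows essentially the same route as the paper's: establish the shift identity $\nu_{\overline{\mathcal L}\otimes\pi^*\overline L_{-a}}=\tau_{-a}\nu_{\overline{\mathcal L}}$, rewrite $\widehat{\mathrm{vol}}/(d\,\mathrm{vol}(L))$ as $\max_{t\in[0,1]}P(\nu_{\overline{\mathcal L}\otimes\pi^*\overline L_{-a}})(t)$ via Theorem \ref{Thm:volume comme une limit} together with Corollary \ref{Cor:convergence of positive slope}, and then invoke the preceding lemma. You have simply spelled out the two steps that the paper leaves implicit (the passage from level $n$ to the limit for the shift formula, and the identification of $\widehat{\mathrm{vol}}/(d\,\mathrm{vol}(L))$ with the maximum of the limit polygon). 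One small typo: tensoring by $\pi^*\overline L_{-a}^{\otimes n}$ multiplies sup-norms by $e^{na}$, not $e^{-na}$ (since $\|\mathbf 1\|_{\sigma,-a}=e^{a}$); this does not affect your conclusion that the slopes are shifted by $-na$.
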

\begin{proof}
If $\overline M$ is a Hermitian line bundle on $\Spec\mathcal O_K$, one has the equality \[\nu_{\overline {\mathcal L}\otimes\pi^*\overline M}=\tau_{\widehat{\deg}(\overline M)}\nu_{\overline L}.\]
Applying this equality on $\overline M=\overline L_{-a}$, one obtains
\[\frac{\widehat{\mathrm{vol}}(\overline{\mathcal L}
\otimes\pi^*\overline L_{-a})}{d\mathrm{vol}(L)}=
\int_{\mathbb R}x_+\tau_{-a}\nu_{\overline{\mathcal L}}(\mathrm{d}x)=\int_{\mathbb R}(x-a)_+\nu_{\overline{\mathcal L}}(\mathrm{d}x).\]
\end{proof}

\begin{rema}
Proposition \ref{Pro:calcul de polygone limite} calculates actually the polygone $P(\nu_{\overline{\mathcal L}})$. In fact, one has
\[-\int_a^{+\infty}\nu_{\overline{\mathcal L}}(]y,+\infty[)\mathrm{d}y=-\int_{\mathbb R}(s-a)_+\nu_{\overline{\mathcal L}}(\mathrm{d}s).\]
Applying the Legendre transformation, we obtain the polygone $P(\nu_{\overline{\mathcal L}})$.
\end{rema}

\backmatter
\bibliography{chen}
\bibliographystyle{smfplain}

\end{document}